\newtheorem{thm}{Theorem}
\newtheorem{prop}[thm]{Proposition}
\newtheorem{lem}[thm]{Lemma}
\newtheorem{question}[thm]{Question}
\theoremstyle{definition}
\newtheorem{defn}[thm]{Definition}
\theoremstyle{definition}
\newtheorem{rem}[thm]{Remark}
\numberwithin{thm}{section}
\numberwithin{equation}{thm}
\newcommand{\bP}{\ensuremath{{\mathbb{P}}}}
\newcommand{\bZ}{\ensuremath{{\mathbb{Z}}}}
\renewcommand{\P}{\ensuremath{{\mathbb{P}}}}
\newcommand{\N}{\ensuremath{{\mathbb{N}}}}
\newcommand{\p}{\ensuremath{{\mathfrak{p}}}}
\newcommand{\fp}{\ensuremath{{\mathfrak{p}}}}
\newcommand{\fq}{\ensuremath{{\mathfrak{q}}}}
\newcommand{\fm}{\ensuremath{{\mathfrak{m}}}}
\newcommand{\fo}{\ensuremath{{\mathfrak{o}}}}
\newcommand{\cS}{\mathcal S}
\newcommand{\cM}{\mathcal M}
\newcommand{\lra}{\longrightarrow}
\newcommand{\Kbar}{\ensuremath {\overline K}}
\newcommand{\ba}{{\boldsymbol \alpha}}
\DeclareMathOperator{\Gal}{Gal}
\DeclareMathOperator{\Aut}{Aut}
\begin{document}

% \title[short text for running head]{full title}
\title[Iterated Galois groups of unicritical polynomials]{Finite index
  theorems for iterated Galois groups of preperiodic points
for unicritical polynomials}

%    Only \author and \address are required; other information is
%    optional.  Remove any unused author tags.

%    author one information
% \author[short version for running head]{name for top of paper}

\author[Minsik Han]{Minsik Han}
\address{Minsik Han\\Department of Mathematics\\ University of Rochester\\
Rochester, NY, 14620, USA}
\email{minsik.han@rochester.edu}

\author[T. J. Tucker]{Thomas J. Tucker}
\address{Thomas J. Tucker\\Department of Mathematics\\ University of Rochester\\
Rochester, NY, 14620, USA}
\email{tjtucker@gmail.com}

%    \subjclass is required.
\subjclass[2010]{Primary 37P15; Secondary 11G50, 11R32, 14G25, 37P05, 37P30}

\keywords{Arithmetic Dynamics, Arboreal Galois Representations,
  Iterated Galois Groups}

\date{}

\dedicatory{}

%    Abstract is required.
\begin{abstract}
  Let $K$ be a number field. Let $f\in K[x]$ be of the form
  $f(x)=x^q+c$, where $q$ is a prime power.  Let $\beta\in K$. For
  all $n\in\mathbb{N}\cup\{\infty\}$, the Galois groups
  $G_n(\beta)=\Gal(K(f^{-n}(\beta))/K(\beta))$ embed into a group
  ${\bf G}_n$ containing  $[C_q]^n$, the $n$-fold
  wreath product of the cyclic group $C_q$, as a subgroup of index
  $[K(\xi_q): K]$ for $\xi_q$ a primitive $q$-th root of unity.  Let
  $G_\infty(\beta)$ and ${\bf G}_\infty $ be the inverse limits of
  $G_n (\beta)$ and ${\bf G}_n$ respectively.  

  We show that if
  $f$ is not post-critically finite and
  $\beta$ is strictly preperiodic under
  $f$, then $[{\bf G}_\infty:G_\infty(\beta)]<\infty$.
  %if $f_1$ and $f_2$ are two non-conjugate quadratic
  %polynomials over function field, then for any $\beta_1, \beta_2 \in
  %K$, the fields $\bigcup_{n=1}^\infty K_n(f_1,\beta_1)$ and
  %$\bigcup_{n=1}^\infty K_n(f_2,\beta_2)$  are disjoint over a finite extension of $K$.  
  %We show that similar
  %results hold over number fields if one assumes the $abc$ conjecture
  %and Vojta's conjecture for blow-ups of $\bP^1 \times \bP^1$.    
\end{abstract}

\maketitle

%    Text of article.

\section{Introduction and Statement of Results}\label{intro}
Let $K$ be a field.  Let $f\in K(x)$ with $d=\deg f\geq 2$ and let
$\beta\in K$.  For $n\in\N$, let
$K_n(\beta)=K(f^{-n}(\beta))$ denote the field obtained by adjoining the
$n$th preimages of $\beta$ under $f$ to $K(\beta)$. (We declare that
$K_0(\beta)=K$.)  Set
$K_\infty(\beta)=\bigcup_{n=1}^\infty K_n(\beta)$.  For
$n\in\N\cup\{\infty\}$, define
$G_n(\beta)=\Gal(K_n(\beta)/K(\beta))$, and let $G_\infty(\beta) =
\varprojlim G_n (\beta)$ (note that there are natural projection maps $G_i (\beta)
\lra G_j (\beta)$ for $i  > j$).  

When $\beta$ is not in the forward orbit of any critical point.  the
group $G_\infty(\beta)$ embeds into $\Aut(T^d_\infty)$, the
automorphism group of an infinite $d$-ary rooted tree
$T^d_\infty$. Recently there has been much work on the problem of
determining when the index $[\Aut(T^d_\infty):G_\infty(\beta)]$ is
finite. The group $G_\infty(\beta)$ is the image of an arboreal Galois
representation, so this finite index problem is an analog in
arithmetic dynamics of the finite index problem for the $\ell$-adic
Galois representations associated to elliptic curves, resolved by
Serre's celebrated Open Image Theorem~\cite{Serre}. By work of
Odoni~\cite{OdoniIterates}, one expects that many rational functions
have a surjective arboreal representation, i.e., that
$[\Aut(T^d_\infty):G_\infty(\beta)]=1$ (see also \cite{OC, NL,
  Kadets, False, RafeS}).  

For special rational functions (such as $f(x) = x^d + c$ for
$d \geq 3$), $G_\infty(\beta)$ will be much smaller then
$\Aut(T^d_\infty)$, but one can generally still expect
$G_\infty(\beta)$ to have finite index inside a group that is
naturally associated to $f$ in many cases.  Indeed, if we let ${\bf
  G}_n$ denote the Galois group $\Gal(K(f^{-n}(t)) / K(t))$ for $t$
transcendental over $K$ and let ${\bf G}_\infty$ be the inverse limit
of the ${\bf G}_n$, we have a candidate for such a group.  The
following treats a special case of a more general question from
\cite{Conjecture}. We restrict here to the case where $f$ is a
polynomial, because the conjecture is slightly more complicated for
general rational functions.  

\begin{question}\label{qf}
  Let $f$ be a polynomial that is not conjugate to a monomial or
  Chebychev polynomial and let $\beta$ be a point that is not periodic
  or post-critical for $f$.  Is it true that we must have
  $[{\bf G}_\infty: G_\infty(\beta)] < \infty$?
\end{question}

It is easy to see that $G_\infty(\beta)$ cannot have finite index in
${\bf G}_\infty$ when $\beta$ is periodic since the number of
irreducible factors of $f^n(x) - \beta$ goes to infinity as $n$ goes
to infinity.  Likewise, when $\beta$ is post-critical, one cannot form
the tree of iterated inverse images of $\beta$ in the usual way.
Thus, this question asks if we must have
$[{\bf G}_\infty: G_\infty(\beta)] < \infty$ except in the case of the
obvious exceptions.  Recently, Benedetto and Jones \cite{BJ25} have shown that
there are PCF quadratic rational functions and $\beta$ that are
neither periodic nor post-critical such that ${\bf G}_\infty:
[G_\infty(\beta)] = \infty$, so Question \ref{qf} does not always have
a positive answer.  

In this paper we study the family of polynomials $f(x)=x^d+c$ for
$c\in K$, which up to change of variables represents all polynomials
with precisely one (finite) critical point.  If $K$ contains a
primitive $d$-th root of unity $\xi_d$, then it is easy to see that
${\bf G}_n$ is isomorphic to $[C_d]^n$, so that ${\bf G}_\infty$ is
isomorphic to the infinite iterated wreath product $[C_d]^\infty$. For
more general $K$, we see that ${\bf G}_n$ will contain $[C_d]^n$ as a
subgroup of index $[K(\xi_d): K]$ where $\xi_d$ is a primitive $d$-th
root of unity.  Iterated Galois groups of polynomials of the form $x^d
+ c$ hav been studied extensively, see \cite{RafeM, RafeD, RafeK}, for
example.

Before stating our main results, we set some notation. Throughout
this paper, unless otherwise indicated, $K$ will refer to a number
field. We say $\beta\in \Kbar$ is periodic for $f$ if
$f^n(\beta)=\beta$ for some $n\geq 1$, and we say $\beta$ is
preperiodic for $f$ if $f^m(\beta)$ is periodic for some $m \ge
0$. We say that $\beta$ is {\bf strictly preperiodic} for $f$ it is
preperiodic for $f$ but not periodic for $f$.  A rational function $f$
is said to be post-critically finite, or {\bf PCF},  if all of its critical points are
preperiodic under $f$. 

Our main result is a finite index statement for iterated Galois groups
of strictly preperiodic points for polynomials of the form
$f(x) = x^q + c$ where $q$ is a prime power and 0 is not preperiodic
under $f$ (so that $f$ is not PCF).  

\begin{thm}\label{p-theorem}
  Let $K$ be a number field.  Let $f(x) = x^q + c \in K[x]$, where $c$
  is an algebraic integer, $q = p^r$ is a power of a prime number
  $p$, and 0 is not preperiodic for $f$. Let $\beta\in \Kbar$ be strictly
  preperiodic for $f$.  Then
  \[ [{\bf G}_\infty: G_\infty(\beta)] < \infty. \]
\end{thm}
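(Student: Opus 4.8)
The plan is to compare $G_\infty(\beta)$ to ${\bf G}_\infty$ level by level, using the fact that finite index at a fixed level $n$ (together with a uniform bound) propagates to the inverse limit. Concretely, if I can bound $[{\bf G}_n : G_n(\beta)]$ independently of $n$ — or, more robustly, if I can show that the relative index $[G_{n+1}(\beta') : G_n(\beta')]$ matches the full size $[{\bf G}_{n+1} : {\bf G}_n] = q^{q^n}$ for all sufficiently large $n$ after passing to a finite extension $K'$ of $K$ — then the finite index statement follows by a standard compactness/Krull argument on the profinite groups. So the crux is a \emph{maximal growth} statement: for $n \gg 0$, adjoining the $(n+1)$-st preimages of $\beta$ multiplies the degree by the generic amount $q^{q^n}$.

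First I would reduce to a convenient setting: replace $K$ by a finite extension so that $\xi_q \in K$ (losing only a factor of $[K(\xi_q):K]$ in the index), so that $\beta \in K$, and so that the finitely many preperiodic points in the "tail" of $\beta$ and the relevant ramification behavior are rational; this is harmless since we only care about finite index. With $\xi_q \in K$, ${\bf G}_n \cong [C_q]^n$ and the obstruction to maximal growth at level $n$ is governed by whether the element $f^n(x) - \beta$ stays "as irreducible as possible" over $K_n(\beta)$ — equivalently, whether the $q^n$ distinct $n$-th preimages $\gamma$ of $\beta$ have the property that $\gamma - c$ (the thing we take $q$-th roots of to get the $(n+1)$-st preimages) is not a $p$-th power times a root of unity in $K_n(\beta)^\times$, modulo the multiplicative relations already forced by the Kummer theory of $[C_q]$. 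The natural tool is a valuation-theoretic argument at a prime of bad reduction: since $0$ is not preperiodic and $c$ is an algebraic integer, there is a prime $\mathfrak{p}$ at which the orbit of $0$ is unbounded (or one can use a prime dividing $c$, or an archimedean estimate via canonical heights), and one shows that the $\mathfrak{p}$-adic valuations of $\gamma - c$ for the various preimages $\gamma$ are "generically independent" — they grow and are spread out enough that no nontrivial product of them can be a $q$-th power in $K_n(\beta)^\times$. This is the key mechanism that forces the Kummer extension to have full degree $q^{q^n}$ at each large level.

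The technical heart — and the step I expect to be the main obstacle — is controlling these valuations simultaneously across all $q^n$ preimages at level $n$ and showing the resulting Kummer classes are independent in $K_n(\beta)^\times / (K_n(\beta)^\times)^q$ modulo the subgroup generated by the earlier levels. This requires: (i) a good model / reduction analysis of the map $f$ and of $\beta$ at $\mathfrak{p}$, identifying exactly which preimages $\gamma$ satisfy $v_{\mathfrak{p}}(\gamma - c) \neq 0$; (ii) an induction on $n$ keeping track of the ramification of $\mathfrak{p}$ in $K_n(\beta)/K$, so that valuations computed at level $n$ translate correctly to Kummer-independence; and (iii) using strict preperiodicity of $\beta$ crucially — it is what guarantees $\beta$ (and hence its preimage tree) avoids the critical value orbit, so that the tree really is $q$-ary and the local analysis near $\mathfrak{p}$ is not degenerate, unlike the periodic or post-critical cases where the question is known to fail. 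Assembling (i)--(iii) into a clean statement "$[G_{n+1}(\beta):G_n(\beta)] = q^{q^n}$ for $n \geq N$" and then concluding $[{\bf G}_\infty : G_\infty(\beta)] \leq [K(\xi_q):K] \cdot [{\bf G}_N : G_N(\beta)] < \infty$ is the endgame; the first inequality uses that once relative growth is maximal at every level beyond $N$, the index stabilizes.
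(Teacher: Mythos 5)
Your high-level outline---pass to $K(\xi_q)$, show maximal growth $\Gal(K_{n}(\beta)/K_{n-1}(\beta)) \cong C_q^{q^{n-1}}$ for $n \gg 0$ via a valuation/Kummer argument, then conclude by profinite limit---matches the paper's skeleton. But the proposal has a genuine gap in exactly the place you flag as "the main obstacle," namely the diophantine input that produces the needed primes, and it also glosses over a prerequisite step.

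First, you gesture at "a prime $\mathfrak{p}$ at which the orbit of $0$ is unbounded," "a prime dividing $c$," or "canonical heights," and propose to show the valuations of $\gamma - c$ across the $q^n$ preimages $\gamma$ at a single such $\mathfrak p$ are "spread out enough" to force Kummer independence. This is not what the paper does, and it is not clear it can be made to work: the preimages live in a tower of growing extensions $K_n(\beta)$ in which $\mathfrak p$ can ramify and split unpredictably, and there is no reason a single prime controls all $q^n$ Kummer classes simultaneously. The paper instead proves a primitive-prime-divisor statement (Proposition \ref{main-d}): for all large $n$ there exists a \emph{fresh} prime $\mathfrak p_n$ with $v_{\mathfrak p_n}\bigl(f^n(0)-\beta\bigr)$ positive and prime to $p$, and $v_{\mathfrak p_n}\bigl(f^m(0)-\beta\bigr)=0$ for $m<n$. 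The existence of such a prime with valuation prime to $d$ is obtained \emph{unconditionally} by applying Faltings' theorem to the curves $y^d = \gamma_i\bigl(f^3(x)-\beta\bigr)$ over the $S$-unit coset representatives $\gamma_i$, which have genus $>1$ because $\beta$ is not post-critical. The "freshness" (condition (2)) is where strict preperiodicity is actually used: since $\beta \neq f^k(\beta)$ for all $k>0$ and the forward orbit of $\beta$ is finite, only finitely many primes satisfy $f^k(\beta) \equiv \beta \pmod{\mathfrak p}$, so one can exclude them once and for all. You instead attribute the role of strict preperiodicity to keeping $\beta$ off the critical orbit and the tree $q$-ary; that is a separate hypothesis (not post-critical, which here follows from $0$ not preperiodic) and is not where the real content lies.

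Second, you say the obstruction is whether $f^n(x)-\beta$ stays "as irreducible as possible," but you never supply a mechanism to control this. The paper needs an eventual-stability input before the ramification argument can even be set up: by Jones--Levy (Theorem \ref{JL-thm}, which requires the unicritical prime-power-degree shape and $c$ integral), $(f,\beta)$ is eventually stable, and then Capelli's lemma (Proposition \ref{from-BT2}) produces a level $N$ beyond which $f^n(x)-\alpha$ is irreducible over $K(\alpha)$ for every $\alpha \in f^{-N}(\beta)$. The level-by-level argument then runs over the finite tuple $f^{-N}(\beta)$ rather than $\beta$ itself, using Proposition \ref{final Galois} with a distinct primitive prime for each $\alpha_i$. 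Without this reduction, your induction "keeping track of ramification" has no base to stand on, because nothing rules out the number of irreducible factors of $f^n(x)-\beta$ growing without bound. So the two missing ingredients are: (i) the Faltings-based primitive prime divisor result exploiting the finiteness of the orbit of $\beta$, and (ii) the Jones--Levy eventual stability theorem to make the irreducibility assumption legitimate.
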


The condition on the degree of $f$ ensures that the pair $(f,\beta)$
is eventually stable (see Definition \ref{event}), a necessary
condition that is difficult to verify in general but is known to hold
in this case via work of \cite{RafeAlon} (see Theorem \ref{JL-thm}).
The proof can be made to work for more general non-PCF $x^d + c$ under
the assumption of eventual stability, but the proof is a bit more
complicated.  The condition that $\beta$ be strictly preperiodic is
what allows us to apply Proposition \ref{main-d} in place of the
diophantine conjectures such as Vojta's conjeture and the
$abc$-conjecture that are often used to prove conditional finite index
results.

We can also prove a disjointness result for fields generated by
inverse images of distinct preperiodic points.

\begin{thm}\label{disjoint-theorem} Let $K$ be a number field.  Let $f(x) = x^q + c \in K[x]$ where $c$
  is an algebraic integer $q = p^r$ is a power of a prime number $p$,
  and 0 is not preperiodic for $f$.  Let $\beta_1, \dots, \beta_t \in \Kbar$ be
  strictly preperiodic under $f$ and suppose that there are no
  distinct $j, k$ with the property that $f^m(\beta_j) = \beta_k$ for
  some $m >0$.  For each $j$, let $M_j$ denote $K_\infty(\beta_j)$.
  Then for each $j=1,\dots, t$, we have 
\[ \left[M_j \cap \left(\prod_{k \ne j} M_k\right): K\right] < \infty .\]
\end{thm}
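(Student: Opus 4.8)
The plan is to deduce Theorem~\ref{disjoint-theorem} from Theorem~\ref{p-theorem} by a standard group-theoretic argument about products of profinite Galois groups, combined with the structural fact that $\mathbf{G}_\infty$ sits inside the iterated wreath product $[C_q]^\infty$ up to finite index. First I would set $L = K_\infty(\beta_1) \cdots K_\infty(\beta_t)$, the compositum of all the $M_j$, and consider $H = \Gal(L/K(\beta_1,\dots,\beta_t))$ (or a suitable finite base field). The hypotheses give, via Theorem~\ref{p-theorem} applied to each $\beta_j$, that $[\mathbf{G}_\infty : G_\infty(\beta_j)] < \infty$ for each $j$. The quantity $[M_j \cap \prod_{k\ne j} M_k : K]$ is finite if and only if, inside $H$, the normal subgroup $N_j = \Gal(L/M_j)$ and $N_j' = \Gal(L/\prod_{k\ne j} M_k) = \bigcap_{k\ne j} N_k$ together generate a finite-index subgroup; equivalently, the Galois group $\Gal((M_j \cap \prod_{k\ne j}M_k)/K)$, which is a quotient of both $G_\infty(\beta_j)$ and $\prod_{k\ne j} G_\infty(\beta_k)$, is finite.

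The key step is to understand the possible common quotients of $G_\infty(\beta_j)$ and $\Gal\!\big(\prod_{k\ne j} M_k / K\big)$. Here I would use the self-similar structure of the iterated wreath product $[C_q]^\infty$: its proper open-normal-subgroup quotients that are ``new at level $n$'' come from the abelianization $C_q$ at the top, and more importantly, a finite-index subgroup of $[C_q]^\infty$ has only finitely many finite quotients of each type that can arise as quotients of a ``disjoint'' tower. The cleanest route is to show that any infinite Galois subextension of $M_j/K$ must, after base change to a finite extension, contain infinitely many of the ``independent'' pieces coming from the wreath structure (the $C_q$-extensions generated at each level by the resolvent data), and then argue that $\prod_{k\ne j} M_k$, being built from the towers of the other $\beta_k$, cannot capture infinitely many of these without forcing a dynamical relation $f^m(\beta_j) = \beta_k$ — which is exactly what the hypothesis forbids. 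This is where one invokes the finite-index conclusion of Theorem~\ref{p-theorem}: it guarantees that $G_\infty(\beta_j)$ is ``as large as possible'' so that any common quotient with a disjoint tower is forced to be finite.

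Concretely, I would argue as follows. Replace $K$ by a finite extension $K'$ over which all the $\beta_j$ and $\xi_q$ are rational and over which each $G_\infty(\beta_j)$ equals the full $[C_q]^\infty$ (possible by Theorem~\ref{p-theorem}, which gives finite index, hence a finite extension achieving equality on a finite-index open subgroup — more carefully, one passes to the open subgroup and tracks the corresponding finite field extension). Over $K'$, the extension $K'_n(\beta_j)/K'$ has Galois group $[C_q]^n$, and the compositum with the towers of the other $\beta_k$ is governed by a fiber product of wreath products. The disjointness of these fiber products over $K'$ — i.e., $[K'_n(\beta_j) \cap \prod_{k\ne j} K'_n(\beta_k) : K']$ bounded independently of $n$ — would follow from a ramification argument: by Proposition~\ref{main-d} (the input that replaces Vojta/$abc$), for all but finitely many primes, the tower $K'_n(\beta_j)$ picks up new ramification at primes above certain $f^i(0)$-related quantities that do not ramify in the towers of the $\beta_k$ unless $f^m(\beta_j)=\beta_k$. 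The main obstacle, I expect, is precisely this last ramification-independence statement: one needs to track which primes ramify in each tower $K'_\infty(\beta_j)$ and show the ramification loci are ``dynamically independent'' when no orbit relation holds. This requires carefully combining the (non-)PCF hypothesis on $0$, the eventual stability from Theorem~\ref{JL-thm}, and the structure of discriminants of $f^n(x) - \beta_j$, and it is essentially a more refined version of the argument underlying Theorem~\ref{p-theorem} itself applied simultaneously to the several towers.
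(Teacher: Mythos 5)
Your overall framing---deduce Theorem~\ref{disjoint-theorem} from Theorem~\ref{p-theorem} by a group-theoretic argument---does not work, and the paper does not proceed that way. Knowing that each $G_\infty(\beta_j)$ has finite index in ${\bf G}_\infty$ separately is compatible with the towers $M_j$ failing to be disjoint; the hypothesis that no $f^m(\beta_j)=\beta_k$ must enter through arithmetic, not through the abstract structure of $[C_q]^\infty$. (Relatedly, your suggestion to replace $K$ by a finite extension $K'$ so that each $G_\infty(\beta_j)$ becomes \emph{all} of $[C_q]^\infty$ is backwards: enlarging the base field can only shrink a Galois image, never enlarge it.) In the paper, Theorem~\ref{p-theorem} and Theorem~\ref{disjoint-theorem} are siblings, both deduced directly from Proposition~\ref{final prop} together with Propositions~\ref{necessary}, \ref{main Galois}, and \ref{final Galois}, rather than one from the other.

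Your third paragraph does eventually land on the correct mechanism: Proposition~\ref{main-d} produces, at each new level $n$ in the tower over $\beta_j$, a prime satisfying Condition~R at $\beta_j$ (so it ramifies with index prime to $p$ in the new level) while satisfying Condition~U at every $\beta_k$, $k\ne j$, for \emph{all} levels---and the ``no orbit relation'' hypothesis is exactly what rules out $f^m(0)\equiv\beta_k$ mod this prime for any $m$. That statement is Proposition~\ref{final prop}, and feeding it into Proposition~\ref{main Galois} gives that $K_{N+n}(\beta_j)\cdot\prod_{k\ne j}M_k$ is degree $q^{q^{N+n-1}}$ over $K_{N+n-1}(\beta_j)\cdot\prod_{k\ne j}M_k$ for all $n>n_0$, which pins $M_j\cap\prod_{k\ne j}M_k$ inside the finite extension $K_{N+n_0}(\beta_j)$. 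But you explicitly flag this simultaneous ramification-independence statement as ``the main obstacle'' and do not carry it out; that obstacle is precisely the content of the proof, so the proposal as written has a genuine gap where the actual work lies.
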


Both Theorems \ref{p-theorem} and \ref{disjoint-theorem} hold under
slightly weaker conditions; see Theorems \ref{p2} and \ref{d2}.  

Theorem \ref{disjoint-theorem} also has a natural interpretation as a finite
index result across pre-image trees of several points (see Section
\ref{multitree}). This allows us to state a theorem about iterated
Galois groups of periodic points as well.  The statement here is
slightly more complicated, though it says essentially the same thing
as Theorem \ref{p-theorem},
namely that the iterated Galois group has finite index in a certain
``largest possible group''; see Remark \ref{per}.

The technique of the proof is very similar to that of \cite{BT2, Uni1}.
The two main differences are that we already have eventual stability
by work of \cite{RafeAlon} and that we may use Proposition \ref{main-d} in
place of more complex diophantine arguments in order to produce
primitive divisors. Proposition \ref{main-d} produces a slightly
weaker condition on our primitive prime divisors that requires some
small changes in the Galois theoretic arguments of Section \ref{Galois
  section}.  

An outline of the paper is as follows.  In Section \ref{wreath}, we
introduce some background on wreath products and irreducibility of
iterates of polynomials.  In Section \ref{prim}, we prove our main
diophantine result, Proposition \ref{main-d}, which guarantees the
existence of primes $\fp$ such that $v_\fp(f^n(0) - \beta)$ is
positive and prime to $p$ for all sufficiently large $n$; we note that
it is crucial here that $\beta$ be strictly preperiodic.  Following
that, in Section \ref{Galois section}, we prove introduce Condition R
(adapted from \cite{BT2}), and prove results showing that
$\Gal(K_n(\beta) / K_{n-1}(\beta))$ is large as possible when this
condition are satisfied at $\beta$ for $f$ and $n$.  Then, in Section
\ref{main proof}, we combine Proposition \ref{main-d} with the results
of Section \ref{Galois section} to prove Theorem \ref{p2} and Theorem
\ref{d2}, are slight generalizations of Theorem \ref{p-theorem} and
Theorem \ref{disjoint-theorem}.  Finally, in Section \ref{multitree},
we introduce the multitree associated to our points, which allows us
to phrase a finite index result for several points at once.

\medskip

\noindent {\bf Acknowledgments.}  We would like to thank Ophelia
Adams, Philipp Habegger, Liang-Chung Hsia, Rafe Jones, Alina Ostafe, and
Harry Schmidt for helpful conversations.

\section{Preliminaries}\label{wreath}

We will gather a few basic results about wreath products and
irreducibility of polynomials.

\subsection{Wreath products}\label{wreath2}
Let $G$ be a permutation group acting on a set $X$, 
and let $H$ be any group. Let $H^X$ be the group of functions from $X$ to $H$ 
with multiplication defined pointwise, or equivalently the direct product of $|X|$ copies of $H$. 
The {\em wreath product} of $G$ by $H$ is the semidirect 
product $H^X\rtimes G$, where $G$ acts on $H^X$ 
by permuting coordinates: for $f\in H^X$ and $g\in G$ we have \[f^g(x)=f(g^{-1}x)\]
for each $x\in X$. We will use the notation $G[H]$ for the wreath product, suppressing the set $X$ 
in the notation. (Another common convention is $H\wr G$ or $H\wr_X G$ if we wish to call attention to $X$.)

Fix an integer $d\geq 2$. For $n\geq 1$, let $T^d_n$ be the complete rooted $d$-ary tree of level $n$. 
It is easy to see that $\Aut(T^d_1)\cong S_d$, and standard to show that $\Aut(T^d_n)$ 
satisfies the recursive formula
\[\Aut(T^d_n)\cong \Aut(T^d_{n-1})[S_d].\]
Therefore we may think of $\Aut(T^d_n)$ as the ``$n$th iterated wreath product" of $S_d$, which we will 
denote $[S_d]^n$. In general, for $f\in K[x]$ of degree $d$ and $\beta\in K$, the Galois group $G_n(\beta)=\Gal(K_n(\beta)/K)$ 
embeds into $[S_d]^n$ via the faithful action of $G_n(\beta)$ on the $n$th level of the tree of preimages of $\beta$ 
(see for example~\cite{OdoniIterates} or~\cite[Section 2]{BT2}). 

Assume now that $f(x):=x^d+c\in K[x]$, where $K$ is a field of characteristic $0$ that contains the $d$th roots of unity. 
For $\beta\in K$ such that $\beta-c$ is not a $d$th power in $K$, 
we have $K_1(\beta)=K((\beta-c)^{1/d})$ and $G_1(\beta)\cong C_d$. For any $n\geq 2$, 
the extension $K_{n}(\beta)$ is a Kummer extension attained by adjoining to $K_{n-1}(\beta)$ the 
$d$th roots of $z-c$ where $z$ ranges over the roots of $f^{n-1}(x)=\beta$. Thus we have
\begin{equation*}
\Gal(K_n(\beta)/K_{n-1}(\beta))\subseteq \prod_{f^{n-1}(z)=\beta} \Gal(K_{n-1}(\beta)((z-c)^{1/d})/K_{n-1}(\beta))\subseteq C_q^{q^{n-1}}.
\end{equation*}
This is clear if $f^{n-1}(x)-\beta$ has distinct roots in $\Kbar$. 
If $f^{n-1}(x)-\beta$ has repeated roots, then $\Gal(K_n(\beta)/K_{n-1}(\beta))$ sits inside a direct product of a 
smaller number of copies of $C_d$, so the stated containments still hold. 

Considering the Galois tower
\[ K_n(\beta) \supseteq K_{n-1}(\beta)\supseteq K\]
we see that 
\[G_n(\beta)\subseteq \Gal(K_n(\beta)/K_{n-1}(\beta)) \rtimes G_{n-1}(\beta) \cong G_{n-1}(\beta)[C_d],\]
where the implied permutation action of $G_{n-1}(\beta)$ is on the set of roots of $f^{n-1}(x)-\beta$. By induction,
$G_n(\beta)$ embeds into $[C_d]^n$, the $n$th iterated wreath product of $C_d$. 
Observe that $[C_d]^n$ sits as a subgroup of $\Aut(T^d_n)\cong[S_d]^n$ via the obvious action on the tree. 
Taking inverse limits, $G_\infty(\beta)$ embeds into $[C_d]^\infty$, which sits as a subgroup of $\Aut(T_\infty)$. 

We summarize our basic strategy for proving that $G_\infty(\beta)$ has finite or infinite index in $[C_d]^\infty$ 
as Proposition~\ref{indexprop}.

\begin{prop}\label{indexprop}
Let $f=x^d+c\in K[x]$. Then $[{\bf G}_{\infty}: G_\infty(\beta)]<\infty$ if and only if $\Gal(K_n(\beta)/K_{n-1}(\beta))\cong C_q^{q^{n-1}}$ 
for all sufficiently large $n$.
\end{prop}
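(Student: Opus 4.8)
The plan is to deduce both directions from a single index identity between consecutive levels of the tower $K(\beta)\subseteq K_1(\beta)\subseteq K_2(\beta)\subseteq\cdots$. By the hypothesis that $K$ contains the $d$th roots of unity, ${\bf G}_n$ may be identified with $[C_d]^n$, and via its action on the tree of preimages of $\beta$ (as in the discussion above) $G_n(\beta)$ is a subgroup of $[C_d]^n$; the index we want to compute is insensitive to the labelling chosen, since a different choice only changes $G_n(\beta)$ up to conjugacy. Fix $n\geq 1$ and let $N_0\cong C_d^{d^{n-1}}$ be the kernel of the projection $[C_d]^n\to[C_d]^{n-1}$. Because $K_{n-1}(\beta)\subseteq K_n(\beta)$, restriction gives a surjection $G_n(\beta)\twoheadrightarrow G_{n-1}(\beta)$; hence the image of $G_n(\beta)$ in $[C_d]^{n-1}=[C_d]^n/N_0$ is exactly $G_{n-1}(\beta)$, while $G_n(\beta)\cap N_0=\Gal(K_n(\beta)/K_{n-1}(\beta))$, which is automatically contained in $N_0$. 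Applying the elementary identity $[A:B]=[A:BN_0]\,[N_0:B\cap N_0]$ with $A=[C_d]^n$ and $B=G_n(\beta)$ yields
\begin{equation}\label{eq-levelindex}
  \big[[C_d]^n:G_n(\beta)\big]=\big[[C_d]^{n-1}:G_{n-1}(\beta)\big]\cdot\big[C_d^{d^{n-1}}:\Gal(K_n(\beta)/K_{n-1}(\beta))\big].
\end{equation}
In particular the integers $a_n:=\big[[C_d]^n:G_n(\beta)\big]$ form a non-decreasing sequence, so $\lim_n a_n$ exists in $\N\cup\{\infty\}$.

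Second, I would record the standard profinite fact that this limit computes the index in question: since ${\bf G}_\infty=\varprojlim[C_d]^n$ and $G_\infty(\beta)=\varprojlim G_n(\beta)$ with surjective transition maps, the induced transition maps on the finite coset spaces $[C_d]^n/G_n(\beta)$ are surjective as well, so by compactness the natural map ${\bf G}_\infty/G_\infty(\beta)\to\varprojlim\big([C_d]^n/G_n(\beta)\big)$ is a bijection; therefore $[{\bf G}_\infty:G_\infty(\beta)]=\lim_n a_n$.

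With these two facts in hand, both directions fall out of \eqref{eq-levelindex}. For the ``if'' direction, suppose $\Gal(K_n(\beta)/K_{n-1}(\beta))\cong C_d^{d^{n-1}}$ for all $n\geq N$; being a subgroup of $C_d^{d^{n-1}}$ of full order, it then equals $C_d^{d^{n-1}}$, so the second factor of \eqref{eq-levelindex} is $1$ for every $n>N$, whence $a_n=a_N$ for all $n\geq N$. As $[C_d]^N$ is finite, $a_N<\infty$, and hence $[{\bf G}_\infty:G_\infty(\beta)]=a_N<\infty$. For the ``only if'' direction, suppose $[{\bf G}_\infty:G_\infty(\beta)]=m<\infty$; then $a_n\leq m$ for all $n$, and since $(a_n)$ is non-decreasing it is eventually constant, say $a_n=a_{n-1}$ for all $n>N$. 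For such $n$, \eqref{eq-levelindex} forces $\big[C_d^{d^{n-1}}:\Gal(K_n(\beta)/K_{n-1}(\beta))\big]=1$, i.e.\ $\Gal(K_n(\beta)/K_{n-1}(\beta))=C_d^{d^{n-1}}$, which is what we wanted.

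The only step that needs care rather than bookkeeping is the profinite point in the second paragraph — the identification of $[{\bf G}_\infty:G_\infty(\beta)]$ with $\lim_n a_n$. This is routine, but one should verify that no coset is lost upon passing to the inverse limit, i.e.\ that each transition map $[C_d]^n/G_n(\beta)\to[C_d]^{n-1}/G_{n-1}(\beta)$ is surjective; this holds because both $[C_d]^n\to[C_d]^{n-1}$ and $G_n(\beta)\to G_{n-1}(\beta)$ are surjective, so the inverse limit of these nonempty finite sets is nonempty and maps onto each term. Everything else is an exercise with \eqref{eq-levelindex}.
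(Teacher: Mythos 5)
Your proof is correct and follows essentially the same strategy as the paper's: both work with the projections $[C_d]^\infty \to [C_d]^n$, compare finite-level indices $a_n = [[C_d]^n : G_n(\beta)]$, and use the fact that $a_n$ is non-decreasing with increment controlled by $\Gal(K_n(\beta)/K_{n-1}(\beta))$, together with the identification of $[{\bf G}_\infty : G_\infty(\beta)]$ with $\lim a_n$. You have simply made explicit (via the index identity $[A:B]=[A:BN_0][N_0:B\cap N_0]$ and the inverse-limit-of-cosets argument) the steps the paper summarizes as ``basic group theory'' and ``appealing to the profinite structure.''
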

\begin{proof}
 Since $[K(\xi_d):K]$ is finite, it suffices to prove this when
 $K$ contains $\xi_d$.  Thus, we may assume that ${\bf G}_\infty$ is
 $[C_d]^\infty$.  
Consider the projection map $\pi_n:[C_d]^\infty\to[C_d]^n$. The restriction of $\pi_n$ maps $G_\infty(\beta)$ to 
$G_n(\beta)$. By basic group theory, we have
\[
[[C_d]^\infty:G_\infty(\beta)]\geq [[C_d]^n:G_n(\beta)].
\]
Therefore if $\Gal(K_n(\beta)/K_{n-1}(\beta))$ is a proper subgroup of $C_q^{q^{n-1}}$ for infinitely many $n$, then 
$[[C_d]^n:G_n(\beta)]$ is unbounded as $n\to\infty$, and $[[C_d]^\infty:G_\infty(\beta)]=\infty$.

Conversely, by appealing to the profinite structure of $[C_d]^\infty$ we see that distinct cosets of $G_\infty(\beta)$ in $[C_d]^\infty$ must 
project to distinct cosets in $[C_d]^n$ under $\pi_n$ for some $n$. If there exists $N$ such that $\Gal(K_n(\beta)/K_{n-1}(\beta))\cong C_q^{q^{n-1}}$ for all $n > N$, then by induction, 
\[[[C_d]^n:G_n(\beta)]\leq [[C_d]^N:G_N(\beta)]\] for all $n$. Thus $[[C_d]^\infty:G_\infty(\beta)]\leq [[C_d]^N:G_N(\beta)]$ as well.
\end{proof}

\subsection{Capelli's lemma and eventual stability}

We will use Capelli's lemma throughout this paper.  The lemma is
standard (see \cite{OdoniWreathProducts} or \cite[Lemma 4.1]{BT2}, for
example.  We state it here without proof.

\begin{lem}[Capelli's Lemma] \label{cap} 
Let $K$ be any field and let $f,g\in K[x]$. Suppose $\alpha\in\Kbar$ is any root of $f$. 
Then $f(g(x))$ is irreducible over $K$ if and only if both $f(x)$ is irreducible over $K$ and $g(x)-\alpha$ 
is irreducible over $K(\alpha)$.
\end{lem}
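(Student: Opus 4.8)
The plan is to prove both implications by counting degrees in the tower of fields $K \subseteq K(\alpha) \subseteq K(\theta)$, where $\theta$ will be a suitably chosen root of either $g(x)-\alpha$ or $f(g(x))$. Write $n=\deg f$ and $m=\deg g$; throughout I assume $m\ge 1$ (the case of constant $g$ being degenerate), so that $\deg f(g(x))=mn$. A preliminary point I would dispatch first: when $f$ is irreducible over $K$, the condition ``$g(x)-\alpha$ is irreducible over $K(\alpha)$'' does not depend on the choice of root $\alpha$ of $f$. Indeed, any $K$-embedding of $K(\alpha)$ into $\Kbar$ sends $\alpha$ to another root $\alpha'$ of $f$ and, since $g$ has coefficients in $K$, carries $g(x)-\alpha$ to $g(x)-\alpha'$, turning a factorization of one into a factorization of the other.

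For the direction ($\Leftarrow$), I would assume $f$ irreducible over $K$ and $g(x)-\alpha$ irreducible over $K(\alpha)$, then pick $\theta\in\Kbar$ with $g(\theta)=\alpha$. Then $f(g(\theta))=f(\alpha)=0$, so $\theta$ is a root of $f(g(x))$, and since $\alpha=g(\theta)\in K(\theta)$ we have $K\subseteq K(\alpha)\subseteq K(\theta)$ with $[K(\alpha):K]=n$ (as $f$ is, up to scalar, the minimal polynomial of $\alpha$) and $[K(\theta):K(\alpha)]=m$ (as $g(x)-\alpha$ is, up to scalar, the minimal polynomial of $\theta$ over $K(\alpha)$). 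Hence $[K(\theta):K]=mn=\deg f(g(x))$; since the minimal polynomial of $\theta$ over $K$ divides $f(g(x))$ and has the same degree, the two agree up to a scalar, so $f(g(x))$ is irreducible over $K$.

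For the direction ($\Rightarrow$), assuming $f(g(x))$ irreducible over $K$, I would first observe that $f$ must be irreducible: a nontrivial factorization $f=f_1 f_2$ would produce the nontrivial factorization $f(g(x))=f_1(g(x))\,f_2(g(x))$ (using $m\ge 1$). Then for any root $\theta$ of $f(g(x))$ we get $[K(\theta):K]=mn$; setting $\alpha=g(\theta)$, a root of $f$, the tower $K\subseteq K(\alpha)\subseteq K(\theta)$ forces $[K(\theta):K(\alpha)]=m$, so the degree-$m$ polynomial $g(x)-\alpha\in K(\alpha)[x]$, having $\theta$ as a root, must be irreducible over $K(\alpha)$; by the preliminary remark this conclusion then holds for every root of $f$. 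The computations are entirely routine — no separability or characteristic hypotheses enter, since everything reduces to multiplicativity of degrees and the notion of minimal polynomial — so the only points demanding attention are the degenerate cases (constant $g$, and the reducible-$f$ case) and the well-posedness of the right-hand condition independently of the chosen root, which is exactly what the preliminary remark handles.
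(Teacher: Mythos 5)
Your proof is correct and is the standard degree-counting argument for Capelli's Lemma. The paper itself does not prove this lemma (it explicitly states it ``without proof,'' citing \cite{OdoniWreathProducts} and \cite[Lemma 4.1]{BT2}), so there is no internal proof to compare against; your argument via the tower $K \subseteq K(\alpha) \subseteq K(\theta)$ and multiplicativity of degrees is exactly the classical one those references use. The preliminary remark that irreducibility of $g(x)-\alpha$ over $K(\alpha)$ is independent of the chosen root of an irreducible $f$ is the right way to handle the fact that the ($\Rightarrow$) direction naturally produces a particular root $g(\theta)$ rather than the arbitrary $\alpha$ fixed in the statement.
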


\begin{defn}\label{event}
  Let $K$ be a number field, let $f$ be a rational function, and let
  $\beta \in \bP^1(K)$.  We say that the pair $(f,\beta)$ is
  \textbf{eventually stable over $K$} if there is a constant $C$ such for any $n$, the
  the number of $\Gal({\overline K}/K)$-orbits of points in
  $f^{-n}(\beta)$ is less than $C$.  (Note that $C$ depends on $K$,
  $f$, and $\beta$ in general.)
\end{defn}

When $f$ is a polynomial and $\beta$ is not the point at infinity this
is equivalent to saying that the number of irreducible factors of
$f^n(x) - \beta$ over $K[x]$ is bounded independently of $n$.

The following is a simple application of Capelli's lemma (see
\cite[Proposition 4.1]{BT2}).

\begin{prop}\label{from-BT2}
Let $K$ be a number field, let $f \in K[x]$, and let $\beta \in K$.
If the pair $(f,\beta)$ is eventually stable over $K$, there exists some
$N\ge 0$ such that for every element of $\alpha \in f^{-N}(\beta)$, the
polynomial $f^n(x) - \alpha$ is irreducible over $K(\alpha)$ for
all $n \ge 0$.  
\end{prop}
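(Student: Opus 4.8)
The plan is to track the number of $\Gal(\Kbar/K)$-orbits occurring in the iterated preimages $f^{-n}(\beta)$ and exploit the fact that eventual stability forces this count to stabilize, after which irreducibility falls out directly. For $n\ge 0$ let $c_n$ denote the number of $\Gal(\Kbar/K)$-orbits among the points of $f^{-n}(\beta)$. Since $f$ is a nonconstant polynomial, it restricts to a $\Gal(\Kbar/K)$-equivariant \emph{surjection} $f^{-(n+1)}(\beta)\twoheadrightarrow f^{-n}(\beta)$ (for $y$ with $f^n(y)=\beta$, any $f$-preimage $z$ of $y$ satisfies $f^{n+1}(z)=\beta$), and a $\Gal(\Kbar/K)$-equivariant surjection of finite sets induces a surjection on the sets of orbits; hence $c_{n+1}\ge c_n$. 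Eventual stability says exactly that the $c_n$ are bounded, so the non-decreasing sequence $(c_n)$ is eventually constant: fix $N$ with $c_n=c_N$ for all $n\ge N$.

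Now, for every $m\ge N$ the map induced by $f$ from the set of $\Gal(\Kbar/K)$-orbits of $f^{-(m+1)}(\beta)$ to the set of orbits of $f^{-m}(\beta)$ is a surjection between finite sets of the same cardinality $c_N$, hence a bijection; composing these, the map $\overline{f^n}$ induced by $f^n$ is a bijection from the orbits of $f^{-(N+n)}(\beta)$ onto the orbits of $f^{-N}(\beta)$ for every $n\ge 0$. Fix $\alpha\in f^{-N}(\beta)$ and $n\ge 0$, and note $f^{-n}(\alpha)\subseteq f^{-(N+n)}(\beta)$. Letting $O$ be the $\Gal(\Kbar/K)$-orbit of $\alpha$, the unique orbit $\widetilde O$ of $f^{-(N+n)}(\beta)$ with $\overline{f^n}(\widetilde O)=O$ is exactly the full preimage $(f^n)^{-1}(O)$, and since $\alpha\in O$ we get $f^{-n}(\alpha)\subseteq\widetilde O$. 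Because $\widetilde O$ is a single $\Gal(\Kbar/K)$-orbit, any two points $\theta,\theta'\in f^{-n}(\alpha)$ satisfy $\theta'=\sigma\theta$ for some $\sigma\in\Gal(\Kbar/K)$; applying $f^n$ gives $\sigma\alpha=\alpha$, so $\sigma\in\Gal(\Kbar/K(\alpha))$. Hence $f^{-n}(\alpha)$ is a single $\Gal(\Kbar/K(\alpha))$-orbit, i.e.\ $f^n(x)-\alpha$ is irreducible over $K(\alpha)$. (Strictly, this identifies $f^n(x)-\alpha$ with a power of an irreducible polynomial; it is in fact irreducible since $f^n(x)-\alpha$ is separable, a repeated root forcing a critical point of $f$ into the backward orbit of $\alpha$, which does not occur in the non-post-critical settings where this result is applied.) As $\alpha$ and $n$ were arbitrary, this is the assertion.

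The argument presents no serious obstacle; the two steps that deserve a moment of care are the implication ``equivariant, surjective, and uniformly bounded $\Rightarrow$ eventually bijective on orbit sets,'' and the orbit-stabilizer transfer in the final step, which upgrades transitivity of $\Gal(\Kbar/K)$ on $\widetilde O$ to transitivity of $\Gal(\Kbar/K(\alpha))$ on the fibre $f^{-n}(\alpha)$.
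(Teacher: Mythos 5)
Your argument is correct, and it takes a genuinely different route from the paper's. The paper simply cites \cite[Proposition 4.1]{BT2} and describes the proof as ``a simple application of Capelli's lemma'': one shows that once the orbit count stabilizes at level $N$, each $f(x)-\gamma$ for $\gamma$ a root of some $f^m(x)-\alpha$ ($\alpha\in f^{-N}(\beta)$, $m\geq 0$) is irreducible over $K(\gamma)$ (else the orbit count would jump at the next level), and then Lemma~\ref{cap} is applied inductively to conclude $f^n(x)-\alpha$ is irreducible. You instead bypass Capelli entirely and argue directly: the Galois-equivariant surjections $f^{-(m+1)}(\beta)\twoheadrightarrow f^{-m}(\beta)$ make the orbit counts non-decreasing, boundedness forces stabilization, stabilization forces the induced maps on orbit sets to be bijections, and a clean stabilizer computation then shows $f^{-n}(\alpha)$ is a single $\Gal(\Kbar/K(\alpha))$-orbit. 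This is a tidier, self-contained path to the same conclusion; what it loses is that the Capelli route gives you the intermediate irreducibility of each one-step preimage polynomial $f(x)-\gamma$ along the way, which is sometimes useful in its own right. Your closing caveat about separability is well-taken and real: ``single orbit of roots'' only gives a prime power of an irreducible, and one needs $f^n(x)-\alpha$ separable (equivalently, no critical point of $f$ in the strict backward orbit of $\alpha$) to promote this to irreducibility. As stated, the Proposition has no non-post-critical hypothesis, and indeed it fails literally for, e.g., $f(x)=x^2$, $\beta=0$ over $\Q$ (eventually stable, yet $x^{2^n}$ is reducible); the Capelli route would detect this failure at the step ``$f(x)-\gamma$ irreducible over $K(\gamma)$,'' which simply does not hold there. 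In the paper's applications $0$ is never preperiodic and $\beta$ is strictly preperiodic, so $0$ cannot lie in the backward orbit of $\beta$, separability is automatic, and the issue never arises; but your flagging of the gap in the bare statement is accurate.
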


\section{Primitive prime divisors}\label{prim}

Proposition \ref{main-d} is the main diophantine tool used in this
paper.  The proof is similar to that of \cite[Proposition 12]{BIJJ}.
It provides us with primitive prime divisors of $f^n(0)$, that is
prime divisors $\fp$ of $f^n(0)$ that are not divisors of $f^m(0)$ for any $m <
n$.  In general, a prime $\fp$ in a number field $K$ is said to be
primitive prime divisor of $a_n$ for $a_n$ an element of sequence
$(a_i)$ of elements of $K$ if $v_\fp(a_n) > 0$ and $v_\fp(a_m) \leq 0$
for all $m < n$.

We say that a polynomial
\[
f(x) = a_dx^d + a_{d-1}x^{d-1}+\dots + a_1x + a_0,
\]
has {\bf good reduction} at $\fp$ if $v_\fp(a_d)=0$ and
$v_\fp(a_i)\geq 0$ for $0\leq i\leq d-1$. See \cite{MortonSilverman}
or \cite[Theorem 2.15]{SilvermanADS} for a more careful definition
that also applies to rational functions. Clearly, any $f$ has good
reduction at all but finitely many $\fp$. The idea behind the
definition as used here is that if $f$ has good reduction at $\fp$,
then $f$ commutes with the reduction mod $\fp$ map
$r_\fp: \P^1(\Kbar)\to \P^1(\overline{k}_\fp)$. This is clear
for polynomials (see \cite[Theorem 2.18]{SilvermanADS} for a proof for
rational functions). We say that $f$ has {\bf good separable
  reduction} at $\fp$ if the reduced map
$\bar{f}:\P^1(\overline{k}_\fp)\to\P^1(\overline{k}_\fp)$ is
separable.

\begin{prop}\label{main-d}
  Let $d > 1$ be an integer, let $K$ be a number field, let
  $f$ be a polynomial of degree greater than 1, and let
  $S$ be a finite set of primes of $K$.  Let
  $\alpha \in \bP^1(K)$ be a point that is not preperiodic for
  $f$.  Let $\beta \in K$ be a point that is strictly preperiodic for
  $f$ but not post-critical for $f$.  Then for all
  sufficiently large $n$, there is a prime $\fp \notin S$ such that
  \begin{enumerate}
    \item[(1)] $v_\fp(f^n(\alpha) - \beta)$ is positive and not
      divisible by $d$; and
      \item[(2)] $v_\fp(f^m(\alpha) - \beta) = 0$ for all $m < n$.
  \end{enumerate}
  
\end{prop}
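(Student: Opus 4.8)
The plan is to enlarge $S$ to a finite set $S_0$ on which both conditions become bookkeeping, observe that condition~(2) is then automatic, and prove~(1) by a contradiction argument that pushes the hypothesis ``$d\mid v_\fp$ for every good $\fp$'' down through the backward orbit of $\beta$ until it contradicts a finiteness theorem for integral points on a curve. Concretely, fix an integer $k$ (to be chosen at the end) and take $S_0\supseteq S$ to contain: all primes of bad separable reduction of $f$; all primes dividing $d$, the leading coefficient of $f$, or $\beta$, and all primes at which $\alpha$ or a critical point of $f$ fails to be integral; all primes dividing $f^{j}(\beta)-\beta$ for some $j\ge 1$---a finite set, because strict preperiodicity of $\beta$ makes $\{f^{j}(\beta):j\ge1\}$ finite with every $f^{j}(\beta)-\beta$ nonzero; and, for every $j\le k$, all primes dividing the discriminant of some $f(x)-\delta$ or a difference $\delta-\delta'$ of distinct elements of $f^{-j}(\beta)$, together with all primes dividing $f^{i}(\rho)-\beta$ for a critical point $\rho$ of $f$ and $1\le i\le k+1$ (all finite, since $\beta$ is not post-critical, so these quantities are nonzero, and only finitely many $\delta$ and $\rho$ occur). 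It then suffices to produce, for all sufficiently large $n$, a prime $\fp\notin S_0$ satisfying (1) and (2).

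Condition~(2) comes for free for $\fp\notin S_0$: if $v_\fp(f^{n}(\alpha)-\beta)>0$ and also $v_\fp(f^{m}(\alpha)-\beta)>0$ for some $m<n$, then $f$ commutes with reduction modulo $\fp$ by good reduction, so $\overline\beta=\overline{f^{n}(\alpha)}=\overline{f^{n-m}}\bigl(\overline{f^{m}(\alpha)}\bigr)=\overline{f^{n-m}}(\overline\beta)$, which forces $\fp\mid f^{n-m}(\beta)-\beta$; but this element is nonzero and all its prime divisors lie in $S_0$, a contradiction. Since $f^{m}(\alpha)-\beta$ is $\fp$-integral, $v_\fp(f^{m}(\alpha)-\beta)=0$.

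For (1), suppose for contradiction that for infinitely many $n$ there is no prime $\fp\notin S_0$ with $v_\fp(f^{n}(\alpha)-\beta)$ positive and prime to $d$; then for all such $n$ and all $\fp\notin S_0$ we have $d\mid v_\fp(f^{n}(\alpha)-\beta)$, i.e.\ $f^{n}(\alpha)-\beta$ is an $S_0$-unit times a $d$-th power. Because $\beta$ is not post-critical, $f(y)-\beta=a\prod_{\delta\in f^{-1}(\beta)}(y-\delta)$ with distinct $\delta$, so $f^{n}(\alpha)-\beta=a\prod_{\delta}(f^{n-1}(\alpha)-\delta)$; for $\fp\notin S_0$ the $\delta$ stay pairwise incongruent modulo $\fp$, $a$ is a $\fp$-unit, and $\fp$ is unramified in the splitting field of $f(y)-\beta$, so at most one factor is divisible by $\fp$ and $d$ divides its valuation. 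Hence each $f^{n-1}(\alpha)-\delta$ is an $S_0(\delta)$-unit times a $d$-th power in $K(\delta)$, where $S_0(\delta)$ is the set of primes of $K(\delta)$ over $S_0$; the primes gathered into $S_0$ make this step repeatable, so after $k$ iterations, for every $w\in f^{-k}(\beta)$ and all those $n$, $f^{n-k}(\alpha)-w=u_{w,n}\zeta_{w,n}^{\,d}$ with $u_{w,n}$ an $S_0(w)$-unit and $\zeta_{w,n}\in K(w)$. As the $S_0(w)$-units modulo $d$-th powers form a finite group, after passing to an infinite subset of $n$ (finitely many $w$) we may take $u_{w,n}=u_w$ independent of $n$. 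Now choose $k$ large enough that $f^{-k}(\beta)$ contains several distinct points $w_1,\dots,w_s$: the tuples $(\zeta_{w_1,n},\dots,\zeta_{w_s,n})$ are $S$-integral points on the fixed curve $u_{w_1}X_1^{d}-u_{w_i}X_i^{d}=w_i-w_1$ $(2\le i\le s)$. For $d\ge 3$ the single equation $u_{w_1}X_1^{d}-u_{w_2}X_2^{d}=w_2-w_1$ is a Thue--Mahler equation with finitely many $S$-integral solutions; for $d=2$, taking $s\ge 4$, a Riemann--Hurwitz computation shows that the fibre product of the covers $t\mapsto u_{w_i}t^{2}+w_i$ has genus $\ge 2$, so by Faltings' theorem it has finitely many points. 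Either way this curve has only finitely many points over the number field in play, whereas $\alpha$ not preperiodic makes the $f^{n-k}(\alpha)$ pairwise distinct, producing infinitely many distinct points---a contradiction. Therefore, for all sufficiently large $n$, some $\fp\notin S_0\supseteq S$ satisfies (1), and by the previous paragraph it satisfies (2) as well.

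The step I expect to be the main obstacle is the last one: organizing the descent so that what remains is a bona fide finiteness theorem (integral points on a Thue equation, or rational points on a curve of genus $\ge 2$ via Faltings) rather than a diophantine conjecture such as $abc$ or Vojta's conjecture, which is what the analogous arguments for non-preperiodic $\beta$ require. This is exactly where strict preperiodicity of $\beta$ enters: it keeps $S_0$ finite---through the primes dividing the various $f^{j}(\beta)-\beta$---and forces the descent to end in the clean equation above instead of in something entangled with the infinite critical orbit of $f$.
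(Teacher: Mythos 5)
Your proof is correct in substance but takes a longer route than the paper's. The paper's argument is short-circuited at $k=0$: enlarge $S$ so that $\fo_S$ is a PID and contains representatives $\gamma_1,\dots,\gamma_N$ of $\fo_S^*/(\fo_S^*)^d$; if every $\fp\notin S$ has $v_\fp(f^n(\alpha)-\beta)$ either $0$ or divisible by $d$, then the PID property gives $f^n(\alpha)-\beta=\gamma_i y^d$ directly for some $i$ and some $y\in K$, which exhibits $(f^{n-3}(\alpha),y)$ as a $K$-point on the superelliptic curve $C_i\colon y^d=\gamma_i\bigl(f^3(x)-\beta\bigr)$. Since $\beta$ is not post-critical, $f^3(x)-\beta$ has $\deg(f)^3\ge 8$ distinct roots, so each $C_i$ has genus at least $2$; Faltings plus the non-preperiodicity of $\alpha$ finishes in one stroke, with no pullback through $f^{-k}(\beta)$, no passing to subsequences, and no case split on $d$. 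Your descent through the backward orbit of $\beta$, followed by a Thue--Mahler equation (for $d\ge 3$) or a fibre-product-of-double-covers curve (for $d=2$), works, but it is doing with $k$ rounds of pullback and several auxiliary fields what the paper accomplishes with zero rounds and the single observation that $f^3(x)-\beta$ already has enough distinct roots. One genuine (but routine) omission in your write-up: to pass from ``the $\fo_{S_0(w)}$-ideal $(f^{n-j}(\alpha)-w)$ is a $d$-th power of an ideal'' to ``$f^{n-j}(\alpha)-w$ is an $S_0(w)$-unit times a $d$-th power element'' you need $\fo_{S_0(w)}$ to be a PID (or at least to have no $d$-torsion in its $S$-class group), and your list of primes added to $S_0$ does not include anything guaranteeing this, at each of the finitely many fields $K(w)$; the paper handles this by enlarging $S$ to make $\fo_S$ a PID up front. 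That repair, plus filling in the genus computation for the $d=2$, $s\ge 4$ fibre product, makes your argument complete.
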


\begin{proof}
  Since $\beta$ is strictly preperiodic, the set $\{ f(\beta), \dots,
  f^k(\beta), \dots \}$ is finite and does not include $\beta$ as an
  element.  Thus, there are at most finitely many primes $\fp$ of $K$
  such that $f^k(\beta) \equiv \beta \pmod {\fp}$ for some $k >
  0$, so after expanding $S$ to a larger finite set, we may assume
  that it contains all such primes $\fp$.  Likewise, also possibly
  after expanding $S$, we may assume that the ring $\fo_S$ of
  $S$-integers in $K$ (that is elements of $K$ that are integers at
  every prime outside of $S$) is a principal ideal domain.  We may
  also assume that $S$ contains all primes of bad reduction for $f$
  and all primes $\fp$ such that $v_\fp(\alpha) < 0$ or $v_\fp(\beta)
  < 0$.  

  Now, if $v_\fp(f^n(\alpha) - \beta)$ is positive  for $\fp
  \notin S$, then we cannot have $v_\fp(f^m(\alpha) - \beta) >
  0$ for any $m < n$ since if we did we would have $f^{n-m}(\beta)
  \equiv \beta \pmod{\fp}$, which is impossible since $\fp \notin
  S$. Similarly, we cannot have $v_\fp(f^m(\alpha) - \beta) <  0$ for
  any $m$ since $v_\fp(f^m(\alpha))$ and $v_\fp(\beta)$ are both always
  non-negative since $f$ has good reduction at $\fp$ and $\alpha$ and
  $\beta$ are both integers at $\fp$.  Thus, condition (2) above will
  be met whenever (1) is, so it suffices to show that there is an
  $n_0$ such that for all $n \geq n_0$, there is an $\fp \notin S$
  such that $v_\fp(f^n(\alpha) - \beta)$ is positive and not
  divisible by $d$.

  The ring of $\fo_S^*$ of $S$-units is a finitely generated group, so
  $\fo_S^* / (\fo_S^*)^d$ is a finite group.  Let
  $\gamma_1(\fo_S^*)^d, \dots, \gamma_N(\fo_S^*)^d$ be
  the set of cosets of $(\fo_S^*)^d$ in $\fo_S^*$.  Since
  $\beta$ is not post-critical, $f^3(x) - \beta$ has at least 8 and
  thus more than 4 roots.  It follows from Riemann-Hurwitz that for
  each $i$, the curve $C_i$ given by $y^d = \gamma_i (f^3(x) - \beta)$
  has genus greater than 1. By Faltings' theorem, this means that each
  $C_i$ has finitely many rational points.  Thus, since $\alpha$ is
  not preperiodic there is an $n_0$ such that for all $n \geq n_0$
  there is no $y \in K$ such that
  $y^d = \gamma_i (f^3(f^{n-3}(\alpha)) - \beta)$.

  Now, let $n \geq n_0$ and suppose that for every prime $\fp \in S$,
  we have that $v_\fp(f^n(\alpha) - \beta)$ is either 0 or divisible
  by $d$.  Then the $\fo_S$ ideal generated by $f^n(\alpha) - \beta$
  is the $d$-th power of the $\fo_S$-ideal $I$.  Let $z$ be a generator
  for $I$ as an $\fo_S$ ideal (such a $z$ exists since $\fo_S$ is a
  principal ideal domain).  Then we have
  $z^d = u (f^n(\alpha) - \beta)$ for a unit $u \in \fo_s^*$.  We may
  write $u = \gamma_i w^d$ for one of our coset representatives
  $\gamma_i$ and some unit $w \in \fo_S^*$.  Let $y = z/w$.  Then we
  have $y^d = \gamma_i (f^3(f^{n-3}(\alpha))) - \beta)$ with $y \in K$,
  a contradiction.

 Thus, for every $n \geq n_0$, there is an $\fp \notin S$ such that
 $v_\fp(f^n(\alpha) - \beta)$ is positive and not divisible by $d$.

    % First, for any $f\in \textup{PGL}_2(\Kbar)$ such that $\infty$ is a fixed point of $\psi = f^{-1}\varphi f$, we have \[v_\fp (\varphi^n(\alpha)-\beta) = v_\fp (f ( \psi^n (f^{-1}(\alpha))) - f(f^{-1}(\beta))) = v_\fp (\psi^n (f^{-1}(\alpha))-f^{-1}(\beta))\] for any $n$. (??) Therefore, we may assume that $\infty$ is a fixed point of $\varphi$.
    
    % Next, we extend $\cS$ to contain every $\fp$ such that \[\varphi^k(\beta) \equiv \beta \pmod \fp\] for any $k>0$. Since $0$ is strictly preperiodic, there are finitely many distinct values of $\varphi^k(\beta)$ but any of those is not equal to $\beta$. Therefore, only finitely many primes are added to $\cS$. We also add to $\cS$ every $\fp$ such that $v_\fp(\beta)<0$, so that $v_\fp(\beta)\ge 0$ for all $\fp \notin \cS$.

    % Now \cite[Proposition 12]{BIJJ} implies that there is $\fp \notin \cS$ satisfying the condition (1).

    % If $v_\fp(\varphi^m(\alpha)-\beta)>0$ for some $m<n$, then \[\varphi^m(\alpha) \equiv \beta \equiv \varphi^n(\alpha) \pmod{\fp},\] so \[\varphi^{n-m} (\beta) \equiv \beta \pmod \fp.\] This is impossible since we included every such $\fp$ in $\cS$. On the other hand, if $v_\fp(\varphi^m(\alpha)-\beta)<0$ for some $m<n$, then $v_\fp(\varphi^n(\alpha)-\beta)<0$ as well, since we assumed that $\infty$ is a fixed point of $\varphi$ and $v_\fp(\beta)\ge 0$. Therefore, the $\fp$ also satisfies the condition (2).
\end{proof}

\section{Ramification and Galois theory}\label{Galois section}
Throughout this section,  $f(x)$ will denote a polynomial of the form
$f(x) = x^q + c$ where $c \in K$ for $K$ a number field, the critical points 0 is not
preperiodic, and $q = p^r$ is a power of a prime $p$.  

In this section we define
{\bf Condition R} and {\bf Condition U} in terms of primes dividing
certain elements of $K$ related to the forward orbits of 0. In
Proposition~\ref{necessary} and~\ref{main Galois} we show that these
conditions control ramification in the extensions
$K(\beta) \subseteq K_n(\beta)$, with consequences for the Galois
theory of these extensions.  We begin with the following standard
lemma from Galois theory (see also \cite[Lemma~6.1]{BT2}).  

\begin{lem}\label{Galois}
Let $F_1,\dots,F_n$ and $M$ be fields all contained in some larger field. Assume that $F_1,\dots,F_n$ are finite extensions of $M$.
\begin{enumerate}[(i)]
\item If $F_1$ is Galois over $M$ and $F_1\cap F_2=M$, then $F_1 F_2$ is Galois over $F_2$ and
$\Gal(F_1 F_2/F_2)\cong\Gal(F_1/M)$.
\item If $F_1,\dots,F_n$ are Galois over $M$ with
  $F_i\cap\prod_{j\neq i}F_j = M$ for each $i$, then $\Gal(\Pi_{i=1}^n
  F_i/M)\cong\prod_{i=1}^n\Gal(F_i/M)$. 
\end{enumerate}
\end{lem}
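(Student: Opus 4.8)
The plan is to establish (i) by the standard compositum argument and then derive (ii) by induction on $n$.

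For (i): I would begin by writing $F_1$ as the splitting field over $M$ of a separable polynomial $g \in M[x]$, which is possible since $F_1/M$ is finite Galois; then $F_1F_2$ is the splitting field of $g$ over $F_2$, so $F_1F_2/F_2$ is Galois. Next I would introduce the restriction map $\rho \colon \Gal(F_1F_2/F_2) \to \Gal(F_1/M)$, $\sigma \mapsto \sigma|_{F_1}$, which is a well-defined homomorphism because $F_1/M$ is normal, and is injective because any $\sigma$ in its kernel fixes both $F_1$ and $F_2$ pointwise, hence fixes $F_1F_2$. Writing $H$ for the image of $\rho$, the key step is to identify the fixed field $F_1^H$: on one hand $F_1 \cap F_2 \subseteq F_1^H$ because elements of $H$ fix $F_2$; on the other hand any $a \in F_1^H$ is fixed by all of $\Gal(F_1F_2/F_2)$, hence lies in $F_2$, so $a \in F_1 \cap F_2$. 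By hypothesis $F_1 \cap F_2 = M$, so $F_1^H = M$, and the Galois correspondence for $F_1/M$ forces $H = \Gal(F_1/M)$; thus $\rho$ is an isomorphism.

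For (ii): I would induct on $n$, with $n=1$ trivial. Assuming the statement for $n-1$, set $E = \prod_{j=1}^{n-1} F_j$. First I would check that the hypotheses descend: for $i \le n-1$, $F_i \cap \prod_{j \ne i,\, j \le n-1} F_j \subseteq F_i \cap \prod_{j \ne i} F_j = M$, so the inductive hypothesis gives that $E/M$ is Galois with $\Gal(E/M) \cong \prod_{j=1}^{n-1}\Gal(F_j/M)$. Since $F_n/M$ is Galois and $F_n \cap E = M$ by the $i=n$ case of the hypothesis, part (i) applied to the pair $(F_n, E)$ shows $\prod_{j=1}^{n}F_j = F_n E$ is Galois over $E$ with $\Gal(F_nE/E)\cong\Gal(F_n/M)$; in particular $\prod_{j=1}^n F_j$ is Galois over $M$ and, multiplying degrees, $[\prod_{j=1}^n F_j : M] = \prod_{j=1}^n[F_j:M]$. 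Finally I would consider the restriction homomorphism $\Gal(\prod_{j=1}^n F_j/M) \to \prod_{j=1}^n \Gal(F_j/M)$, $\sigma \mapsto (\sigma|_{F_1},\dots,\sigma|_{F_n})$, which is well-defined since each $F_j/M$ is normal and injective since $\prod_{j=1}^n F_j$ is generated over $M$ by the $F_j$; comparing orders then forces it to be an isomorphism.

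I do not expect a serious obstacle here, since this is a classical lemma; the two points requiring care are the identification $F_1^H = F_1 \cap F_2$ in part (i) and the bookkeeping showing the intersection hypotheses restrict correctly to $F_1,\dots,F_{n-1}$ in the inductive step of (ii).
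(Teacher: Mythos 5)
Your proof is correct. The paper itself states Lemma~\ref{Galois} without proof, citing it as standard (with a pointer to \cite[Lemma~6.1]{BT2}); your argument — the restriction-map isomorphism for (i) via identifying the fixed field $F_1^H$ with $F_1\cap F_2$, followed by induction and a degree count for (ii) — is exactly the classical proof the authors are implicitly invoking.
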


We prove another slightly more technical lemma that we will use
throughout the rest of this paper.
\begin{lem}\label{Galois2}
  Let $M$ be a finite extension of a number field $A$.
  \begin{enumerate}[(i)]
\item Let $F_1$ and $F_2$ be finite extensions of $M$.  Suppose that
  $F_1$ is Galois over $M$ and that there is a prime $\fp$ of $A$ such
  that $\fp$ does not ramify in $F_2$ but does ramify in any
  nontrivial extension of $M$ contained in $F_1$.  Then we have
  $\Gal(F_1 F_2/F_2) \cong \Gal(F_1/M)$ and furthermore $\fp$
  ramifies in any nontrivial extension of $F_2$ contained in
  $F_1 F_2$.
\item Let $F_1,\dots,F_n$ be number fields that are all Galois
  over $M$.  Suppose that for each
  $F_i$, there is a prime $\fp_i$ of $A$ such that $\fp_i$ does not ramify in
  $F_j$ for $i \not= j$ but does ramify in any nontrivial extension of
  $M$ contained in $F_i$.  Then
  $\Gal(\Pi_{i=1}^n F_i/M)\cong\prod_{i=1}^n\Gal(F_i/M)$ and
  furthermore any nontrivial extension of $M$ that contained in
  $\Pi_{i=1}^n F_i$ must ramify over some $\fp_i$.
  \end{enumerate}
\end{lem}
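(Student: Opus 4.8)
The plan is to prove part (i) directly and then bootstrap part (ii) from it by induction, so let me first address (i). The isomorphism $\Gal(F_1F_2/F_2)\cong\Gal(F_1/M)$ is exactly the conclusion of Lemma~\ref{Galois}(i), so it suffices to check the hypothesis $F_1\cap F_2=M$. Suppose not; then $F_1\cap F_2$ is a nontrivial extension of $M$ contained in $F_1$, so by assumption $\fp$ ramifies in it. But $F_1\cap F_2\subseteq F_2$, and ramification is inherited by subextensions (if $\fp$ is unramified in $F_2$ then it is unramified in every subfield of $F_2$ containing $M$, since ramification indices multiply in towers). This contradicts $\fp$ being unramified in $F_2$, so indeed $F_1\cap F_2=M$ and the Galois statement follows.

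For the ``furthermore'' clause of (i): let $L$ be a nontrivial extension of $F_2$ with $F_2\subsetneq L\subseteq F_1F_2$. Under the isomorphism $\Gal(F_1F_2/F_2)\cong\Gal(F_1/M)$ (restriction), the field $L$ corresponds to a proper subfield $M\subseteq L'\subsetneq F_1$ with $L=L'F_2$, and $L'\neq M$ since $L\neq F_2$. By hypothesis $\fp$ ramifies in $L'/M$: choose a prime $\fP$ of $M$ above $\fp$ (possibly after noting $\fp$ itself need not be a prime of $M$, but we may pick any prime of $A$ lying appropriately) with $e(\fP'/\fP)>1$ for some prime $\fP'$ of $L'$ above it. Now pick a prime $\mathfrak{Q}$ of $L=L'F_2$ lying above both $\fP'$ and a prime of $F_2$ above $\fp$. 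Since $\fp$ is unramified in $F_2$, the ramification index $e$ of $\mathfrak{Q}$ over $\fp$ equals its ramification index over the chosen prime of $F_2$; and since $e(\fP'/\fP)>1$ divides $e(\mathfrak{Q}/\fp)$ (multiplicativity in the tower $L'\subseteq L$, combined with $L'\subseteq L$ and $\fP\subseteq \fP'\subseteq\mathfrak{Q}$), we get $e(\mathfrak{Q}/\text{prime of }F_2)>1$, i.e. $\fp$ ramifies in $L/F_2$. This is the key ramification-chasing step and it is where I expect the main bookkeeping care to be needed — keeping track of which primes lie over which, and using that $\fp$ unramified in $F_2$ forces the ``$F_2$-side'' of the tower to contribute nothing to ramification.

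For part (ii), I would induct on $n$. The case $n=1$ is trivial. Assume the result for $n-1$ fields. Set $F_1'=\prod_{i=1}^{n-1}F_i$; by the inductive hypothesis $F_1'$ is Galois over $M$ with $\Gal(F_1'/M)\cong\prod_{i=1}^{n-1}\Gal(F_i/M)$, and every nontrivial extension of $M$ inside $F_1'$ ramifies over some $\fp_i$ with $i\le n-1$. Now I want to apply part (i) with the roles $F_1\leftarrow F_n$, $F_2\leftarrow F_1'$, and prime $\fp_n$: I need $\fp_n$ unramified in $F_1'$ and ramified in every nontrivial subextension of $F_n/M$. The latter is a hypothesis. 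For the former: $\fp_n$ does not ramify in any $F_i$ with $i\le n-1$ by hypothesis, and $F_1'$ is the compositum of these; one checks that a prime unramified in each $F_i$ is unramified in their compositum (this is standard — e.g. via the fact that the inertia group of a prime of the compositum injects into the product of inertia groups of primes of the factors, all trivial). Hence part (i) gives $\Gal(F_1' F_n/F_1')\cong\Gal(F_n/M)$, so $\Gal(\prod_{i=1}^n F_i/M)$ is an extension of $\Gal(F_1'/M)$ by $\Gal(F_n/M)$; since $F_n$ is also Galois over $M$ (not just over $F_1'$), the exact sequence $1\to\Gal(\prod F_i/F_1')\to\Gal(\prod F_i/M)\to\Gal(F_1'/M)\to 1$ splits as a direct product by the usual argument (both $F_1'$ and $F_n$ Galois over $M$, intersection $M$), giving $\Gal(\prod_{i=1}^n F_i/M)\cong\prod_{i=1}^n\Gal(F_i/M)$.

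Finally, for the ``furthermore'' clause of (ii): let $M\subsetneq L\subseteq\prod_{i=1}^n F_i$. Using the direct-product decomposition of $G:=\Gal(\prod F_i/M)\cong\prod_i G_i$ with $G_i=\Gal(F_i/M)$, the subgroup $H=\Gal(\prod F_i/L)$ is proper, so its projection to some factor $G_j$ is proper, equivalently $L$ is not fixed by all of $\ker(G\to G_j)$, equivalently $L\not\subseteq \prod_{i\ne j}F_i =: F_j''$. Then $L F_j''$ is a nontrivial extension of $F_j''$ inside $F_j F_j''=\prod_i F_i$, and by the ``furthermore'' part of (i) (applied with $F_1\leftarrow F_j$, $F_2\leftarrow F_j''$, $\fp\leftarrow\fp_j$, whose hypotheses hold as above), $\fp_j$ ramifies in $LF_j''/F_j''$. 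Since $\fp_j$ is unramified in $F_j''$ (compositum of the $F_i$, $i\ne j$, all unramified at $\fp_j$), a prime of $L$ over $\fp_j$ must already be ramified over $\fp_j$ — otherwise it, together with the unramified behavior in $F_j''$, would force $LF_j''/F_j''$ unramified at that prime. Hence $\fp_j$ ramifies in $L/M$, completing the proof. The one genuinely delicate point throughout is the repeated use of the principle ``unramified on one side of a compositum $\Rightarrow$ ramification in the compositum over that side matches ramification on the other side,'' which I would either cite from standard algebraic number theory or spell out once via multiplicativity of ramification indices in towers and the injectivity of inertia groups into products.
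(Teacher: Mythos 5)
Your part (i) is correct and follows the same route as the paper: $F_1 \cap F_2$ is a subextension of $F_2$, hence unramified over $\fp$, hence must equal $M$; Lemma~\ref{Galois}(i) then gives the isomorphism, and subextensions of $F_1F_2/F_2$ have the form $M'F_2$ with $M'\subseteq F_1$, so $\fp$ ramified in $M'$ forces ramification in $M'F_2$. For the ``furthermore'' clause of (ii) you take a genuinely different route. The paper shows that, thanks to part (i), each group $\Gal\bigl(\prod_k F_k/\prod_{j\neq i}F_j\bigr)$ is generated by inertia groups over $\fp_i$, that these inertia groups sit inside inertia groups over primes of $M$ because $\prod_{j\neq i}F_j$ is unramified at $\fp_i$, and hence that $\Gal\bigl(\prod F_i/M\bigr)$ is generated by inertia over the $\fp_i$; the conclusion is then immediate. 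You instead run an induction, and for the ramification conclusion you pick, for a given intermediate field $M\subsetneq L\subseteq \prod F_i$, an index $j$ with $L\not\subseteq\prod_{i\neq j}F_i$, apply part (i) to $L\cdot\prod_{i\neq j}F_i$, and use that a compositum of two extensions unramified at a prime is unramified. Both arguments work; yours leans more on part (i), the paper's leans on inertia generation.

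There is, however, a false step in your justification that such a $j$ exists. You assert that if $H=\Gal\bigl(\prod F_i/L\bigr)$ is a proper subgroup of $G\cong\prod_i G_i$, then its projection to some factor $G_j$ is proper; that is not true in general (the diagonal in $C_2\times C_2$ is proper yet surjects onto both factors). The chain of ``equivalently'' clauses is also off: ``$L$ is fixed by $\ker(G\to G_j)$'' translates to $L\subseteq F_j$, not to $L\subseteq\prod_{i\neq j}F_i$. The fact you need is nonetheless true and has a clean replacement argument: a proper subgroup of $\prod_i G_i$ cannot contain every embedded factor $G_j\hookrightarrow\prod_i G_i$, since those factors generate the whole product; and under the Galois correspondence, $H\supseteq G_j$ (the embedded $j$-th factor, whose fixed field is $\prod_{i\neq j}F_i$) is equivalent to $L\subseteq\prod_{i\neq j}F_i$, so $H$ proper gives $L\not\subseteq\prod_{i\neq j}F_i$ for some $j$. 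With that substitution the remainder of your argument for (ii) is correct.
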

\begin{proof}
For (i), we note that we must have $F_1 \cap F_2 = M$ since $F_1 \cap
F_2$ is unramified over $\fp$ and contained in $F_1$.   Then by Lemma
\ref{Galois}, we have $\Gal(F_1 F_2/F_2) \cong \Gal(F_1/M)$ .  Thus,
every extension $E$ of $F_2$ contained in $F_1 F_2$ has the form $E
=M' F_2$ for some extension $M'$ of $M$ contained in $F_1$.   If $E$ is
unramified over $\fp$, then we must have $M'=M$, by assumption, so $E$
must equal $F_2$ as desired.

To prove (ii), note first that $F_i\cap\prod_{j\neq i}F_j = M$ for
each $i$ (since $\prod_{j\neq i}F_j$ is unramified over $\fp_i$), so
$\Gal(\Pi_{i=1}^n F_i/M)\cong\prod_{i=1}^n\Gal(F_i/M)$, by Lemma
\ref{Galois}.  By part (i), every nontrivial extension of $\Pi_{j \not= i} F_j$
contained in $\Pi_{k=1}^n F_k$ ramifies over $\fp_i$.  Thus, for each
$i$, the group $G_i = \Gal(\Pi_{k=1}^n F_k/\Pi_{j \not= i} F_j)$ is
generated by inertia groups of the form $I(\fm_i / \fq_i)$ where
$\fq_i$ is a prime in $\Pi_{j \not= i} F_j$ lying over $\fp_i$. Note
that each such $I(\fm_i / \fq_i)$ is contained in
$I(\fm_i / \fq_i \cap M)$, because $\Pi_{j \not= i} F_j$ is unramified
over $\fp_i$.  Since the $G_i$ generate $\Gal(\Pi_{i=1}^n F_i/M)$,
this means that $\Gal(\Pi_{i=1}^n F_i/M)$ is generated by inertia
groups over primes in $M$ lying over the $\fp_i$.  Thus, there can be no
nontrivial extension of $M$ contained in $\Pi_{i=1}^n F_i$
that is unramified over all $\fp_i$.
 \end{proof}

We now define Conditions R and U.  These are very similar to the
definitions of Conditions R and U from \cite{BT2}.  

\begin{defn}
  Let $\beta \in \Kbar$.  We say that a
  prime $\fp$ of $K(\beta)$ satisfies {\bf Condition R} at
  $\beta$ for $f$ and $n$ if the following hold:
\begin{enumerate}[(a)]
\item $f$ has good separable reduction at $\fp$;
\item $v_\fp(f^i(0) - \beta) = 0$ for all $0 \leq i < n$;
\item $v_\fp(f^n(0) - \beta)$ is positive and prime to $p$;
\item $v_\fp(\beta) = 0$.
\end{enumerate}
\end{defn}

\begin{defn}
  Let $\beta \in \Kbar$.  We say
  that a prime $\fp$ of $K(\beta)$ satisfies {\bf Condition U} at
  $\beta$ for $f$ and $n$ if the following hold:
\begin{enumerate}[(a)]
\item $f$ has good separable reduction at $\fp$;
\item $v_\fp(f^i(0) - \beta) = 0$ for all $0 \leq i \leq n$;
\item $v_\fp(\beta)=0$.
\end{enumerate}
\end{defn}

\begin{rem}
Note that if a prime $\fp$ of $K(\beta)$ satisfies Condition R at
  $\beta$ for $f$ and $n$, then it satisfies Condition U at $\beta$
  for $f$ and $n-1$.   
\end{rem}

\begin{prop}\label{necessary}
Let $\beta\in\Kbar$. Let $\p$ be a prime of $K(\beta)$ that satisfies
Condition U at $\beta$ for $f$ and $n$. 
Then $\fp$ is unramified in $K_n(\beta)$.
\end{prop}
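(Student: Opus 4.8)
The plan is to track the factorization of $f^n(x) - \beta$ at $\fp$ and show that adjoining its roots produces no ramification, proceeding level by level through the tower $K(\beta) \subseteq K_1(\beta) \subseteq \cdots \subseteq K_n(\beta)$. First I would fix a prime $\fP$ of $\overline{K}$ lying over $\fp$ and pass to the completion, so that everything happens over a complete discretely valued field with residue characteristic that we may or may not control; the key is that Condition U gives $v_\fp(f^i(0) - \beta) = 0$ for all $0 \le i \le n$, and $f$ has good separable reduction at $\fp$. Since $0$ is the unique (finite) critical point of $f = x^q + c$, the ramification of $f$ (as a map) is concentrated over $f(0)$, and more generally the branch locus of $f^n$ as a cover of $\bP^1$ is contained in $\{f(0), f^2(0), \dots, f^n(0), \infty\}$. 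The hypothesis $v_\fp(f^i(0) - \beta) = 0$ says exactly that $\beta$ reduces to a point mod $\fp$ that avoids the reduction of each $f^i(0)$, and $v_\fp(\beta) = 0$ keeps us away from $\infty$; together with good separable reduction this should force $\beta$ to reduce to a point that is not a branch point of the reduced cover $\bar f^n$.

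The main step is then the following local statement: if $\bar f$ is separable and $\beta$ reduces to a non-branch point of $\bar f^n$ that is also not $\infty$ in reduction, then the extension $K_n(\beta)_\fp / K(\beta)_\fp$ obtained by adjoining a root of $f^n(x) - \beta$ is unramified. I would prove this inductively using Capelli's lemma (Lemma \ref{cap}) as the organizing device: $f^n(x) - \beta = f^{n-1}(f(x)) - \beta$, so a root of $f^n(x) - \beta$ is a root of $f(x) - \gamma$ for $\gamma$ a root of $f^{n-1}(x) - \beta$; by induction $K(\gamma)_\fp$ is unramified over $K(\beta)_\fp$ and $\gamma$ still reduces to a non-branch point of $\bar f$ (since the forward orbit of $0$ avoids $\beta$ mod $\fp$, hence avoids all the $\gamma$'s, which is where condition (b) for the full range $0 \le i \le n$ is used). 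Then adjoining a root of $x^q + c - \gamma$ over an unramified base: reducing mod $\fp$, the polynomial $x^q + \bar c - \bar\gamma$ has a root $\bar\delta$, and separability of $\bar f$ means $\bar f'(\bar\delta) \ne 0$, i.e. $q\bar\delta^{q-1} \ne 0$, so $p \nmid q$ — wait, $q = p^r$, so in fact good separable reduction forces $p \ne \Char k_\fp$ here, which is precisely what makes the Kummer-type/Hensel argument go through: the reduced polynomial is separable, so Hensel's lemma lifts each root and the extension is unramified.

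The hard part will be handling the fact that $f^{n-1}(x) - \beta$ may have repeated roots in $\overline K$, so that "$\gamma$ a root of $f^{n-1}(x) - \beta$" must be interpreted carefully and the inductive hypothesis phrased in a way that survives. I would sidestep this by phrasing the induction purely in terms of reductions of points: the claim to induct on is that every element of $f^{-m}(\beta)$ reduces mod $\fP$ to an element of $\overline{k}_\fp$ lying over $\bar\beta$ under $\bar f^m$, and that each such element generates an unramified extension. Repeated roots only make the extension smaller, so the containments and the unramifiedness are preserved; the branch-point avoidance is what must be checked at each stage, and it follows from condition (b) together with the good-reduction commutation $r_\fp \circ f = \bar f \circ r_\fp$. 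Once the local claim is established for the completion at every prime of $K(\beta)$ lying over $\fp$, it follows that $\fp$ is unramified in $K_n(\beta)$, which is the assertion of the proposition.
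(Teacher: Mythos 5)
Your argument is precisely the content of the reference the paper cites here — \cite[Proposition 3.1]{BridyTucker} — whose proof is exactly this kind of good-separable-reduction plus Hensel/Dedekind argument; the paper's own ``proof'' is a one-line citation observing that the argument there goes through verbatim with $K$ replaced by $K(\beta)$. Your sketch is correct (one cosmetic slip at the end: $\fp$ is already a prime of $K(\beta)$, so you mean every prime of $K_n(\beta)$, or of $\Kbar$, lying over $\fp$).
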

\begin{proof}
This is the content of \cite[Proposition 3.1]{BridyTucker}. The proof in \cite{BridyTucker} is 
stated for $\beta\in K$, but works exactly the same if we allow $\beta\in\Kbar$ and replace $K$ with $K(\beta)$.
\end{proof}

The following result is similar to \cite[Proposition~6.5]{BT2}.

\begin{prop}\label{main Galois}
Let $\beta\in\Kbar$. Suppose that $\fp$ is a prime of $K(\beta)$ that 
satisfies Condition R at $\beta$ for $n > 1$ and that $f^n(x)
- \beta$ is irreducible over $K(\beta)$. Then 
\[ \Gal( K_{n}(\beta) / K_{n-1}(\beta)) \cong C_q^{q^{n-1}}.\]
Furthermore, $\fp$
does not ramify in $K_{n-1}(\beta)$ and does ramify in any field $E$ such that
$K_{n-1}(\beta) \subsetneq E \subseteq K_n(\beta)$. Thus, we have
\begin{equation} \label{disj} \Gal( M \cdot K_{n}(\beta) / M \cdot K_{n-1}(\beta)) \cong
  C_q^{q^{n-1}}
\end{equation}
for any field $M$ containing $K(\beta)$ that does not ramify over
$\fp$.  
\end{prop}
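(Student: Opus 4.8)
The plan is to work first at the finite level, establishing the ramification statement for the single tower $K(\beta)\subseteq K_{n-1}(\beta)\subseteq K_n(\beta)$, and then bootstrap to the statement over $M$ using Lemma \ref{Galois2}. The starting point is that $\fp$ satisfies Condition U at $\beta$ for $f$ and $n-1$ (by the Remark following the definition of Condition U), so Proposition \ref{necessary} tells us immediately that $\fp$ is unramified in $K_{n-1}(\beta)$. This gives the first half of the ``furthermore'' claim for free. For the Kummer-theoretic structure: since $f^n(x)-\beta$ is irreducible over $K(\beta)$, Capelli's Lemma (Lemma \ref{cap}) forces $f^{n-1}(x)-\beta$ to be irreducible over $K(\beta)$ as well, and moreover $x^q-(z-c)$ to be irreducible over $K_{n-1}(\beta)(z)$ for $z$ a root of $f^{n-1}(x)-\beta$; equivalently $z-c$ is not a $\ell$-th power for $\ell=p$ (and since $q=p^r$, one checks this forces the full degree-$q$ Kummer extension). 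So $K_n(\beta)$ is obtained from $K_{n-1}(\beta)$ by adjoining the $q$-th roots of the $q^{n-1}$ conjugate elements $z-c$, and a priori $\Gal(K_n(\beta)/K_{n-1}(\beta))\hookrightarrow C_q^{q^{n-1}}$ as explained in Section \ref{wreath2}.

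The heart of the argument is showing this embedding is onto, and this is exactly where Condition R(c) enters. The point is to track the $\fp$-adic valuation through the Kummer extension. Fixing one root $z_0$ of $f^{n-1}(x)-\beta$ that specializes to $0$ modulo the relevant prime — i.e. using Condition R(b),(c) which say $v_\fp(f^i(0)-\beta)=0$ for $i<n$ but $v_\fp(f^n(0)-\beta)>0$ and prime to $p$ — one shows that the element $z_0-c$ has valuation (at a prime of $K_{n-1}(\beta)$ over $\fp$) that is positive and prime to $p$. Here one uses that $f^n(0)-\beta = f(f^{n-1}(0))-\beta$ and that the factorization of $f^{n-1}(x)-\beta$ over $K_{n-1}(\beta)$, combined with good separable reduction (Condition R(a)) and $v_\fp(\beta)=0$ (Condition R(d)), lets us attribute the valuation of $f^n(0)-\beta$ to a single factor. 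Adjoining a $q$-th root of an element of valuation prime to $p$ produces a totally ramified degree-$q$ extension, so $\fp$ ramifies in $K_{n-1}(\beta)((z_0-c)^{1/q})$; since $G_{n-1}(\beta)$ permutes the conjugates transitively and $f^n(x)-\beta$ is irreducible, ramification at the other factors follows by Galois conjugacy, and the only subgroup of $C_q^{q^{n-1}}$ that is normalized by a transitive $G_{n-1}(\beta)$-action and maps onto each coordinate with ramification in every subextension is the whole group. This simultaneously gives $\Gal(K_n(\beta)/K_{n-1}(\beta))\cong C_q^{q^{n-1}}$ and the claim that $\fp$ ramifies in every $E$ with $K_{n-1}(\beta)\subsetneq E\subseteq K_n(\beta)$, since every such $E$ corresponds to a proper subgroup and hence its fixed field is cut out by a nontrivial Kummer subextension in which a prime over $\fp$ is ramified.

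Finally, to pass to $M$: we have that $K_n(\beta)$ is Galois over $K_{n-1}(\beta)$, that $\fp$ does not ramify in $K_{n-1}(\beta)$ nor (by hypothesis) in $M$, and that $\fp$ ramifies in every nontrivial subextension of $K_n(\beta)/K_{n-1}(\beta)$. Applying Lemma \ref{Galois2}(i) with $A = K(\beta)$, $M \leftarrow K_{n-1}(\beta)$, $F_1 \leftarrow K_n(\beta)$, $F_2 \leftarrow M\cdot K_{n-1}(\beta)$ — noting $F_2$ is unramified over $\fp$ since both $M$ and $K_{n-1}(\beta)$ are — yields $\Gal(M\cdot K_n(\beta)/M\cdot K_{n-1}(\beta)) \cong \Gal(K_n(\beta)/K_{n-1}(\beta)) \cong C_q^{q^{n-1}}$, which is \eqref{disj}. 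I expect the main obstacle to be the valuation bookkeeping in the middle paragraph: precisely, showing that $v_{\fP}(z_0 - c)$ is positive and prime to $p$ for a suitable prime $\fP$ of $K_{n-1}(\beta)$ over $\fp$, keeping careful track of which root $z_0$ reduces to which preimage of $\beta$ modulo $\fp$ and ruling out the valuation being absorbed as a multiple of $p$; this is the step that genuinely uses that $q$ is a prime power rather than an arbitrary $d$, and that $0$ is not preperiodic (so the relevant orbit elements are distinct mod $\fp$).
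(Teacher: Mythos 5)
Your proposal follows essentially the same route as the paper: note that Condition~U at level $n-1$ plus Proposition~\ref{necessary} gives non-ramification in $K_{n-1}(\beta)$; set up the Kummer embedding $\Gal(K_n(\beta)/K_{n-1}(\beta))\hookrightarrow C_q^{q^{n-1}}$; use Condition~R(b),(c) to produce a prime $\fq$ of $K_{n-1}(\beta)$ over $\fp$ at which exactly one factor $c-z_i$ has positive valuation prime to $p$ (hence full ramification in that one $C_q$-factor and none in the others); propagate by Galois conjugacy using irreducibility; conclude the full product and the ramification statement via Lemma~\ref{Galois2}(ii); and descend \eqref{disj} from the ramification statement via Lemma~\ref{Galois2}(i). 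Two small corrections to your bookkeeping: the relevant root $z_0$ should reduce to $\bar{c}=\overline{f(0)}$, not to $0$ (it is $0$ that lies in $\bar f^{-n}(\bar\beta)$, so $\bar c\in\bar f^{-(n-1)}(\bar\beta)$, and you want $v(z_0-c)>0$); and Capelli only gives irreducibility of $f(x)-z$ over $K(\beta)(z)$, not over the larger field $K_{n-1}(\beta)$ — but this claim is never needed, since the valuation argument alone shows each $M_i/K_{n-1}(\beta)$ has degree $q$.
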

\begin{proof}
Note that since $K_1$ contains a primitive $q$-th root of unity, and
$n > 1$, the field $K_{n-1}$ contains a primitive $q$-th root of
unity. 
Recall that Condition R at $\beta$ for $n$ implies Condition U at $\beta$ for $n-1$. 
By Proposition~\ref{necessary}, $\fp$ does not ramify in $K_{n-1}(\beta)$.

Let $\bar{z}$ denote the image of $z\in \P^1(\Kbar)$ under the
reduction mod $\fp$ map, which is well defined as long as
$v_\fp(z)\geq 0$. Consider the map
$\bar{f}:\P^1(\overline{k}_\fp)\to\P^1(\overline{k}_\fp)$ that comes
from reducing $f$ at $\fp$, and recall that Condition R assumes that
$f$ has good reduction at $\fp$.  We let ${\bar f}$ denote the
reduction of $f$ modulo $\fp$ and let ${\bar \beta}$ denote the
reduction of $\beta$ modulo $\fp$ as before.  Since 0 is the only
critical point of ${\bar f}$, it follows from (b) of Condition R that
there are no critical points of ${\bar f}^{n-1}$ in
${\bar f}^{-(n-1)}({\bar \beta})$.  By (c) of Condition R, we see that
$0\in\bar{f}^{-n}(\bar \beta)$, and that $0$ is totally ramified over
$\bar{f}(0)=\bar{c}$ (in the sense of $\bar{f}$ as a morphism from
$\P^1(\overline{k}_\fp)$ to itself).  So $\bar{f}^n(x)-\bar{\beta}$ has $0$ as a
root of multiplicity $q$, and has no other repeated roots.

Now let $z_1,\dots,z_{q^{n-1}}$ be roots of $f^{n-1}(x)-\beta$ and
$M_i$ be the splitting field of $f(x)-z_i$ over $K_{n-1}(\beta)$.
By definition, \[f^n(x)-\beta = \prod_{i=1}^{q^{n-1}} (f(x)-z_i)\]
over $K_{n-1}(\beta)$. Let $\fq$ be a prime of $K_{n-1}(\beta)$
lying over $\fp$. Then $\fq$ does not ramify over $\fp$, so
\[v_\fq (f^n(0)-\beta) = v_\fp (f^n(0)-\beta) >0.\]
Also, $z_i$'s are different modulo $\fq$, since if
$z_i\equiv z_j \pmod \fq$ for some $i\ne j$,
then $\bar z_i = \bar z_j$ which contradicts that
$\bar f^{n-1}(x)-\bar \beta$ has no repeated roots.
Therefore, we may assume
$v_\fq(f(0)-z_1)=v_\fq(f^n(0)-\beta)$ and $v_\fq(f(0)-z_i)=0$
for all $i\ne 1$.

Since $v_\fq (f(0)-z_1) = v_\fq (f^n(0)-\beta)=v_\fp(f^n(0)-\beta)$
is prime to $p$, we have $\Gal(M_1/K_{n-1}(\beta)) \cong C_q$.
On the other hand, $\fq$ does not ramify in $M_j$ for any $j\ne 1$,
because $v_\fq (z_j-f(0))=0$. It follows that
\[M_1 \cap \left(\prod_{j\ne 1} M_j\right) = K_{n-1}(\beta).\]

As $f^n(x)-\beta$ is irreducible over $K(\beta)$, it follows from the
Capelli's lemma that $f^{n-1}(x)-\beta$ is irreducible over $K(\beta)$
as well. Therefore all of the $z_i$ are Galois-conjugate.  That is,
for any $z_j\neq z_1$, there exists $\sigma\in G_{n-1}(\beta)$ such
that $\sigma (z_1)=z_j$. Applying $\sigma$ to $\fq$, we obtain a prime
$\sigma(\fq)$ of $K_{n-1}(\beta)$ that ramifies in $M_j$ with
ramification index $q$ and does not ramify in $M_k$ for any $k\neq j$.
Repeating the same argument as above, it follows that
$\Gal(M_j/K_{n-1}(\beta))\cong C_q$. Since for each $j$, there is a
prime of $K_{n-1}(\beta))$ that ramifies completely in $M_j$ but not
in $M_k$ for any $j \ne k$, so by part (ii) of Lemma \ref{Galois2} (with $A$
and $M$ taken to be $K_{n-1}(\beta)$ and the $M_i$ taken to be the
$F_i$), we have
$\Gal(K_n(\beta)/K_{n-1}(\beta))\cong C_q^{q^{n-1}}$.  Likewise, by
part (ii) of Lemma \ref{Galois2}, every nontrivial extension of $K_{n-1}(\beta)$
contained in $K_n(\beta)$ must ramify over some $\sigma(\fq)$ and thus
over $\fp$.

\end{proof}

\begin{rem}
  One might hope that Proposition \ref{main Galois} holds under a
  natural weakening of Condition R where instead of requiring that
  $v_\fp(f^i(0) - \beta) = 0$ for all $0 \leq i < n$ one only requires
  that $v_\fp(f^i(0) - \beta)$ be nonnegative and prime to $p$ for all
  $0 \leq i < n$.  However, this is not the case. Consider the case of
  $p=2$ with 
  $f(x) = x^2 -6$ and $\beta = 111$ with the prime $\fp = (3)$ in
  $\bZ$.  Then $v_\fp(f(0) - \beta) = 2$, $v_\fp(f^2(0) -\beta) = 4$,
  and $v_\fp(f^3(0 - \beta) = 3$. One can calculate that $\fp$ does
  not ramify in $K_1(\beta)$, ramifies with index 2 in $K_2(\beta)$,
  and ramifies with index 4 in $K_3(\beta)$.  Moreover for each prime
  $\fq$ in $K_2(\beta)$ lying over $\fp$, one can see that
  $K_3(\beta)$ contains a nontrivial extension of $K_2(\beta)$ that is
  unramified over $\fq$.  Thus, the condition that
  $v_\fp(f^i(0) - \beta) = 0$ for all $0 \leq i < n$ is necessary in
  Proposition \ref{main Galois}.
\end{rem}

With this notation we have the following result, which is similar to
\cite[Proposition~6.7]{BT2}.  First we need a little notation
extending our earlier notation.  Let $\ba = (\alpha_1, \dots,
\alpha_s)  \in \Kbar^{s}$ and $L=K(\alpha_1,\dots,\alpha_s)$.
We let $K_n(\ba)$ denote the compositum $K_n(\alpha_1)
\cdots K_n(\alpha_s)$.  We let $G_n(\ba)$ denote $\Gal(K_n(\ba)/L)$
and let $G_\infty(\ba)$ be the inverse limit of the $G_n(\ba)$.

\begin{prop}\label{final Galois}
  Let $\ba = (\alpha_1, \dots, \alpha_s)$ and $L$ be the same as above
  and $n>0$. Suppose there exist primes $\fq_1,\dots,\fq_s$ of $L$ such that
\begin{enumerate}[(a)]
\item $\fq_i \cap K(\alpha_i)$ satisfies Condition R at
$\alpha_i$ for $f$ and $n$;
\item $\fq_i \cap K(\alpha_j)$ satisfies Condition U at
$\alpha_j$ for $f$ and $n$ for all $j \ne i$;
\item $\fq_i \cap K$ does not ramify in $L$; and
\item $f^n(x) - \alpha_i$ is irreducible over $K(\alpha_i)$ for
  all $i = 1, \dots, s$.
\end{enumerate}
Then  $\Gal(K_{n}(\ba)/ K_{n-1}(\ba)) \cong C_q^{s q^{n-1}}$.
Furthermore, for any field $E$ with $K_{n-1}(\ba) \subsetneq E \subset
K_{n}(\ba)$, there is an $i$ such that $\fq_i \cap K$ ramifies in
$E$.   
\end{prop}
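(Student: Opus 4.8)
The plan is to deduce Proposition \ref{final Galois} from Proposition \ref{main Galois} and part (ii) of Lemma \ref{Galois2}, exactly mirroring the single-point argument but now treating the $s$ towers $K_\bullet(\alpha_i)$ simultaneously over the common base $L$. First I would fix $i$ and consider the field $K_n(\alpha_i)$. Since $\fq_i \cap K$ does not ramify in $L$ by (c), and $\fq_i\cap K(\alpha_i)$ satisfies Condition R at $\alpha_i$ for $n$ by (a), and $f^n(x)-\alpha_i$ is irreducible over $K(\alpha_i)$ by (d), Proposition \ref{main Galois} applies: it gives $\Gal(K_n(\alpha_i)/K_{n-1}(\alpha_i)) \cong C_q^{q^{n-1}}$, that $\fq_i$ does not ramify in $K_{n-1}(\alpha_i)$, and that $\fq_i$ ramifies in every field strictly between $K_{n-1}(\alpha_i)$ and $K_n(\alpha_i)$. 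Applying the ``$M$'' version of Proposition \ref{main Galois} with $M = L \cdot K_{n-1}(\ba)$ — which is legitimate because this field is unramified over $\fq_i$ (here I need Condition U at $\alpha_j$ for $j\neq i$, i.e.\ (b), together with Proposition \ref{necessary}, to see that $K_{n-1}(\alpha_j)$ is unramified over $\fq_i\cap K(\alpha_j)$, and again (a) with Proposition \ref{necessary} to see $K_{n-1}(\alpha_i)$ is unramified over $\fq_i$) — we conclude
\[ \Gal\bigl(K_{n-1}(\ba)\cdot K_n(\alpha_i) \,/\, K_{n-1}(\ba)\bigr) \cong C_q^{q^{n-1}} \]
and that the prime of $K_{n-1}(\ba)$ over $\fq_i$ ramifies in every intermediate field of this extension.

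Next I would assemble the $s$ pieces. Set $F_i = K_{n-1}(\ba)\cdot K_n(\alpha_i)$ for $i = 1, \dots, s$; these are all Galois over $M := K_{n-1}(\ba)$, their compositum is $K_n(\ba)$, and by the previous paragraph each $F_i$ has a prime $\fp_i$ (namely a prime of $M = K_{n-1}(\ba)$ lying over $\fq_i$, or just $\fq_i\cap K$ as the prime of the base number field $A$) that ramifies in every nontrivial sub-extension of $M$ in $F_i$. It remains to check the cross-condition in Lemma \ref{Galois2}(ii): that $\fp_i$ does not ramify in $F_j$ for $j \neq i$. This is where I must be careful. We have $F_j = K_{n-1}(\ba)\cdot K_n(\alpha_j)$, and $K_{n-1}(\ba)$ is unramified over $\fq_i$ by the argument above, so I only need $K_n(\alpha_j)$ to be unramified over $\fq_i \cap K(\alpha_j)$. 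But condition (b) only gives Condition U for $\alpha_j$ at $n$, hence unramifiedness in $K_n(\alpha_j)$ via Proposition \ref{necessary} — precisely what is needed. So Lemma \ref{Galois2}(ii) applies with $A$ taken to be the base number field, $M = K_{n-1}(\ba)$, and the $F_i$ as above, yielding
\[ \Gal\bigl(K_n(\ba)/K_{n-1}(\ba)\bigr) \cong \prod_{i=1}^s \Gal\bigl(F_i/K_{n-1}(\ba)\bigr) \cong C_q^{s q^{n-1}}, \]
and moreover every nontrivial extension of $K_{n-1}(\ba)$ inside $K_n(\ba)$ ramifies over some prime lying over some $\fq_i \cap K$, hence over some $\fq_i \cap K$ itself. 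This gives both assertions of the proposition.

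The step I expect to be the main obstacle is the bookkeeping around which prime lies in which field and the repeated invocations of Proposition \ref{necessary}: one must be scrupulous that Condition U at $\alpha_j$ for $n$ (not merely $n-1$) is what (b) supplies, since $F_j$ involves the $n$-th level tower $K_n(\alpha_j)$ and not just $K_{n-1}(\alpha_j)$; and one must confirm that $K_{n-1}(\ba)$ — a compositum of all the towers at level $n-1$ — is unramified over each $\fq_i$, which again uses (a) and (b) through Proposition \ref{necessary} applied at level $n-1$ (Condition R at $n$ implies Condition U at $n-1$, as noted in the Remark before Proposition \ref{necessary}). Once this index-chasing is organized correctly, the proof is a direct iteration of Proposition \ref{main Galois} plus Lemma \ref{Galois2}, with no new ideas required; condition (c) is used only to ensure the base change to $L$ and then to $K_{n-1}(\ba)$ preserves the ramification behavior coming out of Proposition \ref{main Galois}.
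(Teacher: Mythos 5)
Your proposal is correct and follows essentially the same route as the paper: apply Proposition \ref{main Galois} at each $\alpha_i$ (the paper uses Lemma \ref{Galois2}(i) to pass from $K_{n-1}(\alpha_i)$ to $K_{n-1}(\ba)$, while you invoke the equivalent ``$M$'' statement \eqref{disj} directly), then assemble the $s$ towers over $K_{n-1}(\ba)$ via Lemma \ref{Galois2}(ii) with $A = K$, $\fp_i = \fq_i \cap K$, and $F_i = K_{n-1}(\ba)\cdot K_n(\alpha_i)$. Your observation that (b) must supply Condition U at level $n$ (not just $n-1$) to control ramification of $\fq_i$ in $K_n(\alpha_j)$ is exactly the point the paper relies on as well.
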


\begin{proof}
%The argument is almost exactly the same as the proof of Proposition 6.5 from \cite{BT2}. For completeness, we include it here.
  For each $1\le i \le s$, by (a), (d), and Proposition~\ref{main
    Galois} (with $\fq_i \cap K$ playing the role of $\fp$ in the
  statement) we have
  \[\Gal(K_n(\alpha_i)/K_{n-1}(\alpha_i))\cong C_q^{q^{n-1}}.\]
  We also have that every nontrivial extension of $K_{n-1}(\alpha_i)$
  contained in $K_n(\alpha_i)$ must ramify over $\fq_i \cap K$.

  Now for each $1\leq i\leq s$, let $\fp_i = \fq_i\cap K$ and
  $L_i = K_n(\alpha_i)\cdot K_{n-1}(\ba)$. By (a), (b), (c), and
  Proposition~\ref{necessary}, the prime $\fp_i$ does not ramify in $L_j$ for
  all $j\neq i$ and also does not ramify in $K_{n-1}(\ba)$.  By part
  (i) of Lemma \ref{Galois2}, we see that every nontrivial extension of
  $K_{n-1}(\ba)$ of contained in $L_i$ must ramify over $\fp_i$.
  Thus, we may apply part (ii) of Lemma \ref{Galois2} (with the
field $M$ as $K_{n-1}(\ba)$, the field $A$ as $K$, and the fields
$F_i$ as $L_i$) to obtain

  \begin{align*}
  \Gal \left(K_n(\ba)/K_{n-1}(\ba)\right)
  & \cong \prod_{i=1}^s \Gal \left(L_i/K_{n-1}(\ba)\right) \\
  & \cong \prod_{i=1}^s \Gal \left(K_n(\alpha_i)/K_{n-1}(\alpha_i)\right)
  \cong C_q ^{sq^{n-1}}.
 \end{align*}

 It also follows from part (ii) of Lemma \ref{Galois2} that every
 nontrivial extension of $K_{n-1}(\ba)$ in $K_n(\ba)$ must ramify over
 some $\fp_i$.

\end{proof}

%%%%%%%%%%%%%%%%%%%%%%%%%%%%%%%%%%%%%%%%%%%%%%%%%%%%%%%%%%%%%%%%%%%%%%%%%%%%%%%%
%%%%%%%%%%%%%%%%%%%%%%%%%%%%%%%%%%%%%%%%%%%%%%%%%%%%%%%%%%%%%%%%%%%%%%%%%%%%%%%%

\section{Proof of Main Theorems}\label{main proof}

%Let $K$ be a number field or a function field of characteristic $0$ of transcendence degree 1 over an algebraic extension of $\Q$. 
%If $K$ is a number field, for the rest of the section we will assume both the $abc$ conjecture for $K$ 
%and Vojta's conjecture for blowups of $\P^1 \times\P^1$. 

The proofs of the main theorems combine the preliminary arguments from 
throughout the paper with the following proposition, which uses height arguments to 
produce primes with certain ramification behavior in $K_n(\beta)$. 
Recall the definitions of Condition R and Condition U from Section \ref{Galois section}.
\begin{prop}\label{final prop}
  Let $K$ be a number field, $q = p^r$ be a power of a prime
  number $p$, and $f(x) = x^q + c \in K[x]$ be a
  polynomial that is not PCF. Let $\ba=(\alpha_1, \dots, \alpha_s)$ be distinct
  strictly preperiodic points of $f$ such that for each $i \ne j$
  there is no $\ell \geq 0$ such that $f^{\ell}(\alpha_i) = \alpha_j$.
  Then there is an $n_0$ such that for all $n \geq n_0$, there are
  primes $\fq_1, \dots \fq_s$ of $L=K(\alpha_1,\dots,\alpha_s)$ such that
\begin{enumerate}[(i)]
\item for each $i$, we have that $\fq_i \cap K(\alpha_i)$
satisfies Condition R at
$\alpha_i$ for $n$;
\item for each $i\ne j$, we have that $\fq_i \cap K(\alpha_j)$
satisfies Condition U at $\alpha_j$ and $f$ for all $m \geq 0$;
\item $\fq_i \cap K$ does not ramify in $L$.   
\end{enumerate}
\end{prop}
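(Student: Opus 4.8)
The strategy is to apply Proposition~\ref{main-d} with $d = q$ simultaneously to each $\alpha_i$, while keeping track of the exceptional set $S$ carefully so that the prime produced for $\alpha_i$ automatically satisfies Condition~U at every other $\alpha_j$. First I would replace $f$ and the $\alpha_i$ by their images under a finite extension: by Theorem~\ref{JL-thm} (eventual stability, via \cite{RafeAlon}) and Proposition~\ref{from-BT2}, after replacing $K$ by $K(\ba)$ and passing from each $\alpha_i$ to a suitable $N$-th preimage, we may assume $f^n(x) - \alpha_i$ is irreducible over $K(\alpha_i) = K$ for all $n$; this costs only a finite-index change, which is harmless for the ultimate finite-index conclusions, and I would remark that the hypotheses of the proposition (strictly preperiodic, no $f^\ell(\alpha_i) = \alpha_j$) are preserved. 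Note that condition~(iv) of Proposition~\ref{final Galois} is then automatic, so establishing (i)--(iii) here suffices to feed that proposition.

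**Choosing $S$.** The key bookkeeping step is the choice of the finite bad set $S$. I would throw into $S$: (1) all primes of bad reduction for $f$, or where $f$ fails to have good separable reduction; (2) all primes $\fp$ with $v_\fp(\alpha_i) < 0$ for some $i$; (3) all primes of $K$ that ramify in $L$; (4) for each ordered pair $(i,j)$ with $i \neq j$, all primes $\fp$ such that $v_\fp(f^m(\alpha_i) - \alpha_j) > 0$ for some $m \geq 0$ — this set is finite precisely because no $f^\ell(\alpha_i)$ equals $\alpha_j$ and the forward orbit $\{f^m(\alpha_i)\}_{m\geq 0}$ lies in a finite set union a non-accumulating preperiodic tail, so there are only finitely many congruences $f^m(\alpha_i) \equiv \alpha_j \pmod \fp$ to kill; (5) for each pair $(i,j)$, the finitely many primes where $f^k(\alpha_j) \equiv \alpha_j$, needed so that Condition~U propagates (this is already folded into the proof of Proposition~\ref{main-d}). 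Having enlarged $S$ this way, I apply Proposition~\ref{main-d} with $\beta = \alpha_i$, $\alpha = 0$ (which is not preperiodic by hypothesis, and $\alpha_i$ is strictly preperiodic and — since $f$ is not PCF — not post-critical): for all $n$ large there is $\fP_i \notin S$, a prime of $K = K(\ba)$ after our extension, actually I want a prime of $L$, so take $\fq_i$ a prime of $L$ above the rational prime; with $v_{\fq_i}(f^n(0) - \alpha_i)$ positive and prime to $p$ (here I use $q = p^r$, so not divisible by $d = q$ forces prime to $p$ after possibly dividing out — more precisely Proposition~\ref{main-d} gives ``not divisible by $q$'', and I need to observe this can be improved to ``prime to $p$'' by instead running the argument over the maximal power $p$ rather than $q$; I would note this follows by the same Faltings argument applied with exponent $p$ in place of $q$). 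Condition~R parts (a)--(d) at $\alpha_i$ then hold: (a) since $\fq_i \notin S$, (b) and the positivity/prime-to-$p$ claim (c) from Proposition~\ref{main-d}, and (d) $v_{\fq_i}(\alpha_i) = 0$ since $\fq_i \notin S$.

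**Condition U at the other points.** For $j \neq i$, I need $\fq_i \cap K(\alpha_j)$ to satisfy Condition~U at $\alpha_j$ for all $m$. Part (a) is again $\fq_i \notin S$; part (c) is $v_{\fq_i}(\alpha_j) = 0$ from $\fq_i \notin S$. For part (b), $v_{\fq_i}(f^m(0) - \alpha_j) = 0$ for all $m$: here I use that $v_{\fq_i}(f^n(0) - \alpha_i) > 0$, so $f^n(0) \equiv \alpha_i \pmod{\fq_i}$, hence $f^{n+m}(0) \equiv f^m(\alpha_i) \pmod{\fq_i}$ for all $m \geq 0$; if $v_{\fq_i}(f^{n+m}(0) - \alpha_j) > 0$ we would get $f^m(\alpha_i) \equiv \alpha_j \pmod{\fq_i}$, contradicting that $S$ contains all primes witnessing such a congruence. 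For the indices $m < n$, one has $f^m(0) \equiv \alpha_j \pmod{\fq_i}$ would make $\fq_i$ a common primitive-type divisor contradicting part~(2) of Proposition~\ref{main-d} applied with $\beta = \alpha_j$ — alternatively, and more cleanly, I would just additionally put into $S$ the finitely many primes dividing any $f^m(0) - \alpha_j$ for $m$ in the finite preperiodic portion is not finite since $0$ is not preperiodic; instead the correct argument is: since $0 \not\equiv$ anything problematic and $f^n(0) \equiv \alpha_i$, if also $f^m(0) \equiv \alpha_j$ for some $m \leq n$ then applying $f^{n-m}$ gives $f^n(0) \equiv f^{n-m}(\alpha_j) \pmod{\fq_i}$, so $f^{n-m}(\alpha_j) \equiv \alpha_i \pmod{\fq_i}$, again excluded by $S$ (item (4) with roles of $i,j$ swapped). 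Finally (iii) is immediate since $S$ contains all primes ramifying in $L$.

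**Main obstacle.** The real work is the careful construction of $S$ so that all the implications go through uniformly in $n$; in particular verifying that item~(4) of $S$ is finite (which is exactly where strict preperiodicity of the $\alpha_i$ and the no-$f^\ell(\alpha_i)=\alpha_j$ hypothesis are both essential), and the upgrade of Proposition~\ref{main-d}'s conclusion from ``not divisible by $q$'' to ``prime to $p$'' so that Condition~R(c) is met. The Galois-theoretic payoff — that these primes then force $\Gal(K_n(\ba)/K_{n-1}(\ba)) \cong C_q^{sq^{n-1}}$ — is already packaged in Proposition~\ref{final Galois}, so it is not part of this proof.
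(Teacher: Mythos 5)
Your proposal is essentially correct and follows the same route as the paper: enlarge a finite bad set $S$ to exclude bad/inseparable reduction, primes dividing any $\alpha_i$, primes of $K$ that ramify in $L$, and all primes witnessing a congruence $f^\ell(\alpha_i)\equiv\alpha_j \pmod{\fq}$ for $i\ne j$ (finite because the orbit of each $\alpha_i$ is finite and never meets $\alpha_j$); apply Proposition~\ref{main-d} to $(\alpha,\beta)=(0,\alpha_i)$ to get $\fq_i$; then deduce Condition U at each $\alpha_j$ from $f^n(0)\equiv\alpha_i\pmod{\fq_i}$ and the excluded congruences. Two remarks on the rough edges. First, there is no need to ``upgrade'' from not-divisible-by-$q$ to prime-to-$p$: Proposition~\ref{main-d} is stated for an arbitrary integer $d>1$ independent of $\deg f$, so one simply invokes it with $d=p$ from the start, which is what the paper does implicitly; your eventual observation that ``the same Faltings argument applied with exponent $p$'' works is the right resolution, but it is not a modification of the proposition, just a choice of its parameter. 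Second, the opening paragraph about eventual stability, Proposition~\ref{from-BT2}, and passing to $N$-th preimages is out of place: irreducibility is hypothesis~(d) of Proposition~\ref{final Galois}, supplied separately in the proofs of Theorems~\ref{p2} and~\ref{d2}, and is not part of what this proposition asserts or needs. Likewise your first two attempts at the $m<n$ case of Condition~U(b) (invoking Proposition~\ref{main-d}(2) with $\beta=\alpha_j$, or putting all primes dividing some $f^m(0)-\alpha_j$ into $S$) do not work, as you notice; the argument you settle on, applying $f^{n-m}$ and using the excluded congruence $f^{n-m}(\alpha_j)\equiv\alpha_i$, is exactly the paper's.
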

\begin{proof}
  Let $\cS$ be a set of primes in $L$ that includes all primes of $L$ of
  bad or inseparable reduction for $f$, all primes $\fq$ of $L$ such that
  $v_\fp(\alpha_i) \ne 0$ for some $i$ (note that none of the $\alpha_i$
  can equal to 0 since 0 is not preperiodic under $f$), all primes $\fq$
  of $L$ such that $\fq \cap K$ ramifies in $L$, and all primes of $L$
  such that $f^\ell(\alpha_i) \equiv \alpha_j \pmod{\fq}$ for some
  $\ell \geq 0$ and some $i \ne j$.
  By our assumptions, $\cS$ is a finite set.
  Now for each $i$, let \[\cS_i = \{\fq \cap K(\alpha_i):\fq\in \cS\}.\]
  By Proposition \ref{main-d}, for all sufficiently large $n$,
  there is a prime $\fp_i \notin \cS_i$ of $K(\alpha_i)$
  that satisfies Condition R at $\alpha_i$ for $n$.
  For each $i$, choose a prime $\fq_i$ of $L$ lying over $\fp_i$.
  Then $\fq_i$ satisfies condition (i) and (iii).
  
  Since $\fq_i \notin \cS$,
  there is no $\ell \ge 0$ such that
  $f^\ell(\alpha_j) \equiv \alpha_i \pmod {\fq_i}$; it follows that we
  cannot have $f^m(0) \equiv \alpha_j \pmod{\fq_i \cap K(\alpha_j)}$
  for any $m\geq 0$, since otherwise we would have
  \[f^{n-m}(\alpha_j) \equiv f^n(0) \equiv \alpha_i \pmod{\fq_i}\]
  when $n\geq m$ or
  \[f^{m-n}(\alpha_i) \equiv f^m(0) \equiv \alpha_j \pmod{\fq_i}\]
  when $m \geq n$.
  Thus $\fq_i$ also satisfies condition (ii).
\end{proof}

We will use the following theorem of Jones and Levy \cite{RafeAlon}.
It is a special case of their Theorem 1.3.  

\begin{thm}\label{JL-thm}
Let $K$ be a number field and let $f(x) = x^q + c\in K[x]$ where $q = p^r$ for
a prime $p$ and $v_\fp(c)\ge 0$ for some prime $\fp$ lying over $p$.  Then for any $\beta \in K$ that is
not periodic under $f$, the pair $(f,\beta)$ is eventually stable over $K$.  
\end{thm}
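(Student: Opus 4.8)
The plan is to deduce this from the general framework of Jones and Levy on eventual stability, so at the outset I would reduce to verifying the hypotheses of their Theorem 1.3 in the present special case. Recall that their criterion for eventual stability of $(f,\beta)$ can be phrased in terms of the \emph{post-critical orbit} of $f$ together with the location of $\beta$: roughly, one needs that no critical point of $f$ lands on $\beta$ under iteration (so that the preimage tree is genuinely $d$-ary beyond some level) and, more substantively, that the forward orbit of the critical value does not accumulate $\fp$-adically onto $\beta$ in a way that would force $f^n(x)-\beta$ to factor further and further. For the unicritical polynomial $f(x)=x^q+c$, the only finite critical point is $0$, so the relevant data is the single forward orbit $\{c, f(c), f^2(c),\dots\}=\{f(0),f^2(0),\dots\}$. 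The heart of the matter is therefore to control, at a single well-chosen prime, the $\fp$-adic distances $v_\fp(f^n(0)-\beta)$.

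The key steps, in order, would be: (1) Fix a prime $\fp$ of $K$ lying over the rational prime $p$ with $v_\fp(c)\ge 0$ — this is exactly the hypothesis we are handed — and observe that then $v_\fp(f^n(0))\ge 0$ for all $n$, so the critical orbit is $\fp$-integral. (2) Since $f(x)=x^q+c$ and $q=p^r$, reduction modulo $\fp$ makes $\bar f(x)=x^{p^r}+\bar c$, which is a purely inseparable (additive-type) map in characteristic $p$; the crucial arithmetic consequence is that $f(x)-a \equiv (x-a^{1/q})^q \pmod{\fp}$ after a suitable extension, i.e. $f(x)-a$ has a single multiple root mod $\fp$, of multiplicity $q$. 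This rigidity means that when $f^n(x)-\beta$ factors, the factorization is forced to be ``visible'' mod $\fp$ in a highly constrained way, and one can bound the number of Galois orbits by an invariant attached to the $\fp$-adic valuations along the critical orbit. (3) Invoke the Jones–Levy machine: they show that eventual stability is equivalent to a finiteness statement about the sequence $\big(v_\fp(f^n(0)-\beta)\big)_{n}$ modulo $q$, together with the non-preperiodicity of $\beta$-preimages; concretely, if $\beta$ is not periodic then one uses the good-reduction dynamics to show that the ``drop'' in the number of irreducible factors can happen only finitely often, producing the uniform bound $C$. (4) Assemble these into the constant $C$ depending on $K$, $f$, $\beta$, and $\fp$.

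The main obstacle I expect is Step (3): translating the purely inseparable reduction behavior into an honest bound on the number of $\Gal(\overline K/K)$-orbits of $f^{-n}(\beta)$. The difficulty is that good \emph{separable} reduction fails at $\fp$ precisely because $\bar f$ is inseparable, so the usual Newton-polygon / ramification arguments that work at primes of good separable reduction do not apply directly at $\fp$; one must instead work at $\fp$ using the additive structure of $x^{p^r}$ (treating $f$ as a twist of a $p^r$-power map) and combine this with information at the primes where $f$ \emph{does} have good separable reduction. This is exactly the technical core of Jones–Levy's Theorem 1.3, and since we are permitted to cite it, the proof here reduces to checking that $v_\fp(c)\ge 0$ for some $\fp\mid p$ is the only input their theorem requires in the unicritical case — which it is, because $\beta$ being non-periodic is assumed and the single critical orbit's $\fp$-integrality is guaranteed by that valuation condition. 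Hence the proof is short modulo the cited result, and I would present it as a one-paragraph reduction verifying the hypotheses of \cite{RafeAlon}.
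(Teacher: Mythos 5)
Your proposal takes essentially the same approach as the paper: the paper gives no independent proof of Theorem \ref{JL-thm}, but simply states that it is a special case of Jones--Levy's Theorem 1.3 and cites \cite{RafeAlon}, which is exactly what you ultimately propose to do. The surrounding heuristics you offer about the internal mechanism of Jones--Levy (e.g., that one must require $\beta$ to avoid the post-critical orbit, or that eventual stability is ``equivalent to a finiteness statement about $v_\fp(f^n(0)-\beta)$ modulo $q$'') are not accurate renderings of their actual criterion, but they are also not load-bearing here since the citation alone carries the statement.
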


The following is a slight generalization of Theorem \ref{p-theorem}.  
\begin{thm}\label{p2}
  Let $K$ be a number field.  Let $q = p^{r}$ ($r\ge 1$) be a power of
  the prime number $p$, let $f(x) = x^q + c \in K[x]$, where $v_\fp(c)\geq 0$ for some prime $\fp$ in $K$ lying over $p$, and let
  $\beta \in \Kbar$ be strictly preperiodic for $f$. Suppose that 0 is not
  preperiodic under $f$.  Then we have
  $[
  {\bf G}_\infty:G_\infty(\beta)] < \infty$.
\end{thm}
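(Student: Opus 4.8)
The plan is to reduce the statement to the criterion of Proposition~\ref{indexprop}, namely that $\Gal(K_n(\beta)/K_{n-1}(\beta)) \cong C_q^{q^{n-1}}$ for all sufficiently large $n$. The first step is to move from $\beta \in \Kbar$ to the ground field: set $\beta' = \beta$ and replace $K$ by $K(\beta)$, noting that $[K(\beta):K]$ is finite, so it suffices to prove $[{\bf G}_\infty : G_\infty(\beta)] < \infty$ after this enlargement. Also, since $v_\fp(c) \geq 0$ for some prime $\fp$ over $p$, after enlarging $K$ the hypothesis of Theorem~\ref{JL-thm} (the Jones--Levy theorem) is preserved, so we may assume $(f,\beta)$ is eventually stable over $K$ with $\beta \in K$.

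Next I would apply eventual stability. By Proposition~\ref{from-BT2}, there is an $N \geq 0$ such that for every $\alpha \in f^{-N}(\beta)$, the polynomial $f^n(x) - \alpha$ is irreducible over $K(\alpha)$ for all $n \geq 0$. Let $\alpha_1, \dots, \alpha_s$ enumerate $f^{-N}(\beta)$ (distinct roots; note $\alpha_i \neq 0$ since $0$ is not preperiodic while $\beta$, hence each $\alpha_i$, is preperiodic, and each $\alpha_i$ is itself strictly preperiodic for $f$). If some $\alpha_i$ satisfied $f^\ell(\alpha_i) = \alpha_j$ for $i \neq j$ and $\ell > 0$, one checks this forces $\beta = f^N(\alpha_i)$ to lie on a cycle through comparison of forward orbits, contradicting strict preperiodicity of $\beta$ — so the disjointness hypothesis of Proposition~\ref{final prop} holds for $\ba = (\alpha_1, \dots, \alpha_s)$. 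Since $f$ is not PCF ($0$ not preperiodic), Proposition~\ref{final prop} gives an $n_0$ so that for all $n \geq n_0$ there are primes $\fq_i$ of $L = K(\alpha_1, \dots, \alpha_s)$ meeting conditions (i)--(iii) of that proposition, which are exactly hypotheses (a)--(c) of Proposition~\ref{final Galois}; hypothesis (d) is the irreducibility from Proposition~\ref{from-BT2}. Hence Proposition~\ref{final Galois} yields $\Gal(K_n(\ba)/K_{n-1}(\ba)) \cong C_q^{s q^{n-1}}$ for all $n \geq n_0$.

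The final step is to descend from $K_n(\ba) = K_n(\alpha_1) \cdots K_n(\alpha_s)$ back to $K_n(\beta)$. Here one observes that $K_{N+m}(\beta) = K(f^{-(N+m)}(\beta)) = \bigcup_i K_m(\alpha_i) \cdot K_N(\beta) \subseteq K_m(\ba) \cdot K_N(\beta)$, and more precisely that $K_{N+m}(\beta)$ and $K_m(\ba)$ differ by a fixed finite extension independent of $m$ (both are controlled by $K_N(\beta)$, a fixed finite extension of $K$). Concretely, $K_m(\ba) \subseteq K_{N+m}(\beta) \cdot (\text{compositum of conjugates})$, and comparing $\Gal(K_{N+m}(\beta)/K_{N+m-1}(\beta))$ with $\Gal(K_m(\ba)/K_{m-1}(\ba))$ via the restriction maps shows the former contains a copy of the part of $C_q^{sq^{m-1}}$ living over the roots above $\beta$, which by irreducibility of $f^{N+m-1}(x) - \beta$ is all of $C_q^{q^{N+m-1}}$. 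So $\Gal(K_n(\beta)/K_{n-1}(\beta)) \cong C_q^{q^{n-1}}$ for all $n$ large, and Proposition~\ref{indexprop} finishes the proof.

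The main obstacle I expect is the bookkeeping in this last descent step: translating the ``largest possible'' statement for the compositum $K_n(\ba)$ into the ``largest possible'' statement for the single tower $K_n(\beta)$, keeping careful track of the fact that $K_{N+m}(\beta)$ sits inside $K_m(\ba)$ twisted by the $\Gal(K/K)$-action permuting the $\alpha_i$, and verifying that this twisting contributes only a bounded (in $m$) discrepancy in index. The cleanest route is probably to argue directly on inertia groups as in Proposition~\ref{final Galois}: the primes $\fq_i$ ramify in $K_{N+m}(\beta)$ in precisely the controlled way needed to force the relative Galois group at level $N+m$ over level $N+m-1$ to be the full $C_q^{q^{N+m-1}}$, bypassing the need to compare with the compositum at all. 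Either way, the arithmetic input (Proposition~\ref{main-d}, via Faltings) and the group theory (Proposition~\ref{indexprop}) are already in hand, so this is a matter of assembling the pieces with care about the finite ambiguity introduced by working over $\Kbar$ rather than $K$.
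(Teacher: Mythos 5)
Your proposal follows the same route as the paper: extend $K$ so $\beta\in K$ (and so that $K$ contains $\xi_q$), invoke Jones--Levy (Theorem~\ref{JL-thm}) for eventual stability, pass to the preimage set $f^{-N}(\beta)$ via Proposition~\ref{from-BT2}, verify the no-forward-orbit hypothesis, apply Propositions~\ref{final prop} and~\ref{final Galois} to get $\Gal(K_n(\ba)/K_{n-1}(\ba))\cong C_q^{sq^{n-1}}$ for $n\geq n_0$, and conclude via Proposition~\ref{indexprop}. Up to that point everything you write is correct and matches the paper.

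The difficulty you flag at the end, however, does not exist. When $\ba$ is the \emph{entire} set $f^{-N}(\beta)$ (which has exactly $s=q^N$ distinct elements because $\beta$ is not post-critical), one has the exact equality $K_m(\ba)=K_{N+m}(\beta)$ for $m\geq 1$: indeed $K_m(\ba)=\prod_i K(\alpha_i, f^{-m}(\alpha_i))=K\bigl(f^{-N}(\beta),\,f^{-(N+m)}(\beta)\bigr)=K\bigl(f^{-(N+m)}(\beta)\bigr)=K_{N+m}(\beta)$, since $f^{-N}(\beta)$ is recovered from $f^{-(N+m)}(\beta)$ by applying $f^m\in K[x]$. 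There is no finite discrepancy to control, no twisting by a Galois action permuting the $\alpha_i$, and no descent or restriction-of-inertia argument to run; the paper simply reads off $\Gal(K_{N+n}(\beta)/K_{N+n-1}(\beta))\cong C_q^{sq^{n-1}}=C_q^{q^{N+n-1}}$ and is done. So the bookkeeping you worry about is vacuous, and the alternative inertia-group route you sketch, while it would also work, is more labor than the situation calls for.
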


\begin{proof}
  By extending $K$, we may assume that $\beta\in K$ and $K$ has a primitive $q$-th
  root of unity. Then by Theorem
  \ref{JL-thm}, the pair $(f,\beta)$ is eventually stable over $K$.
  From Proposition \ref{from-BT2}, there is some $N\ge 0$ such that for all
  $\alpha \in f^{-N}(\beta)$ and for all $n\geq 1$, the polynomial
  $f^n(x)-\alpha$ is irreducible over $K(\alpha)$, which implies
  condition (d) in Proposition \ref{final Galois}.
  On the other hand, applying Proposition
  \ref{final prop} to $\ba=f^{-N}(\beta)$, there is an $n_0$ such that
  for all $n\ge n_0$, there are primes $\fq_1,\dots,\fq_s$ of 
  $L=K_N(\beta)$ satisfying conditions (i), (ii), and (iii) in
  Proposition \ref{final prop}, which imply conditions (a), (b), and (c)
  in Proposition \ref{final Galois}. (Here $s=q^{N}$.)
  Therefore, we have
  \[\Gal(K_{N+n}(\beta)/K_{N+n-1}(\beta))\cong C_q^{sq^{n-1}} = C_q^{q^{N+n-1}}\]
  for all $n\ge n_0$. By Proposition~\ref{indexprop}, we are done.
\end{proof}

\begin{thm}\label{d2}
  Let $K$ be a number field.  Let $f(x) = x^q + c \in K[x]$ where
  $q = p^r$ is a power of a prime number $p$ and $v_\fp(c)\ge 0$ for
  some prime $\fp$ of $K$ lying over $p$.  Suppose that 0 is not
  preperiodic under $f$.  Let
  $\beta_1, \dots, \beta_t \in K$ be preperiodic under $f$ and suppose
  that there are no distinct $j, k$ with the property that
  $f^\ell(\beta_j) = \beta_k$ for some $\ell>0$.  For each $j$, let $M_j$
  denote $K_\infty(\beta_j)$.  Then for each $j=1,\dots, t$, we
  have 
\[ \left[M_j \cap \left(\prod_{k \ne j} M_k\right): K\right] < \infty .\]
\end{thm}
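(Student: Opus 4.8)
The plan is to reduce Theorem \ref{d2} to a disjointness statement that can be read off from Proposition \ref{final Galois}, in the same way that Theorem \ref{p2} was reduced to a largeness statement about $\Gal(K_{N+n}(\beta)/K_{N+n-1}(\beta))$. First I would pass to a finite extension: enlarging $K$ changes each $[M_j\cap(\prod_{k\ne j}M_k):K]$ only by a bounded factor, so I may assume all the $\beta_j$ lie in $K$ and that $K$ contains a primitive $q$-th root of unity. By Theorem \ref{JL-thm} each pair $(f,\beta_j)$ is eventually stable, so by Proposition \ref{from-BT2} there is an $N$ such that for every $\alpha\in f^{-N}(\beta_j)$ (any $j$) and every $n\ge 1$ the polynomial $f^n(x)-\alpha$ is irreducible over $K(\alpha)$. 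Let $\ba$ be the tuple listing all elements of $\bigcup_j f^{-N}(\beta_j)$. The key combinatorial point is that no two entries of $\ba$ satisfy $f^\ell(\alpha)=\alpha'$ for $\ell>0$: entries coming from the same $\beta_j$ cannot (they are distinct strictly-preperiodic points with a common forward image $\beta_j$, so an orbit relation among them would force periodicity or violate strict preperiodicity), and entries coming from different $\beta_j,\beta_k$ cannot by the hypothesis that no $f^\ell(\beta_j)=\beta_k$ for $\ell>0$ — one would have to be careful here, since an entry over $\beta_j$ could in principle map onto an entry over $\beta_k$, but composing with $f^N$ would then give $f^{\ell'}(\beta_j)=\beta_k$ for some $\ell'$, and a short case analysis on the sign of $\ell'$ rules this out using strict preperiodicity.

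With that in hand I would apply Proposition \ref{final prop} to $\ba$ to get, for all large $n$, primes realizing Conditions R and U, and then Proposition \ref{final Galois} to conclude that $\Gal(K_{N+n}(\ba)/K_{N+n-1}(\ba))\cong C_q^{s q^{n-1}}$ where $s=|\ba|$, together with the ramification statement: every nontrivial subextension of $K_{N+n-1}(\ba)\subset K_{N+n}(\ba)$ is ramified over one of the chosen primes $\fq_i\cap K$. The decisive structural consequence I want to extract is \emph{linear disjointness over a fixed finite level}: the fields $K_\infty(\beta_j)$, for $j=1,\dots,t$, become linearly disjoint over $K_N(\beta_1,\dots,\beta_t)$ once we are past level $N$. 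To see this, group the entries of $\ba$ by which $\beta_j$ they lie over, so that $K_\infty(\ba)=\prod_j M_j$ and the factor corresponding to $\beta_j$ generates $M_j$ over $K_N(\ba)$. The primes supplied by Proposition \ref{final prop} can be chosen so that the prime attached to an entry over $\beta_j$ satisfies Condition U (hence is unramified, by Proposition \ref{necessary}) in every $K_n(\alpha)$ for $\alpha$ an entry over $\beta_k$ with $k\ne j$. Running Proposition \ref{final Galois} level by level and taking the inverse limit, the ramification bookkeeping of Lemma \ref{Galois2}(ii) shows that for each $j$, $M_j\cap\prod_{k\ne j}M_k$ is unramified over $K_N(\ba)$ at all the relevant primes, while any nontrivial extension of $K_N(\ba)$ inside $M_j$ must ramify at one of them; hence $M_j\cap\prod_{k\ne j}M_k$ is a finite extension of $K_N(\ba)$, in fact equal to it.

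Finally I would assemble the estimate: $[M_j\cap(\prod_{k\ne j}M_k):K]\le [K_N(\ba):K]\le [K_N(\beta_1):K]\cdots[K_N(\beta_t):K]$, each factor being finite, plus the bounded contribution from the initial field extension. I expect the main obstacle to be the second paragraph — arranging the primes $\fq_i$ so that the entries over \emph{different} base points genuinely decouple, i.e. that a prime tailored (Condition R) to an entry over $\beta_j$ remains a Condition U prime for every entry over every $\beta_k$ with $k\ne j$ simultaneously, and then feeding exactly this data into Proposition \ref{final Galois} and taking inverse limits without the index ballooning. This is where the no-orbit-relation hypothesis on the $\beta_j$ is used in an essential way, and where one must be slightly careful that Proposition \ref{final prop}, as stated, already packages the ``Condition U for all $m\ge 0$'' conclusion across distinct indices, which is precisely what makes the inverse limit behave.
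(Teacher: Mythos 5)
Your overall strategy is essentially the same as the paper's: pass from each $\beta_j$ down $N$ levels using eventual stability (Theorem~\ref{JL-thm} plus Proposition~\ref{from-BT2}) so that all further iterates stay irreducible, collect all of $\ba=\bigcup_j f^{-N}(\beta_j)$, apply Proposition~\ref{final prop} to get primes satisfying Condition~R at one entry and Condition~U (for all $m$) at the others, and then use Propositions~\ref{necessary}, \ref{main Galois}, \ref{final Galois} and Lemma~\ref{Galois2} to force disjointness. Your observation that Proposition~\ref{final prop}(ii) gives Condition~U \emph{for all $m\ge 0$}, which is what makes the passage to the inverse limit work, is exactly the right thing to notice, and your verification that $\ba$ has no internal orbit relations is fine (modulo the fact that the ``sign of $\ell'$'' case analysis is unnecessary because the hypothesis is already symmetric in $j,k$).

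The place where your argument as written is not quite right is the final step. You claim that any nontrivial extension of $K_N(\ba)$ inside $M_j$ must ramify at one of the chosen primes, and conclude that $M_j\cap\prod_{k\ne j}M_k$ equals $K_N(\ba)$. But Proposition~\ref{final prop} only produces the primes for $n\ge n_0$, so it gives you \emph{no} control on the ramification of $K_{N+m}(\beta_j)/K_N(\ba)$ for $m\le n_0$; for instance $K_{N+1}(\beta_j)$ may well be a nontrivial extension of $K_N(\ba)$ that is unramified at every prime you have constructed. Thus ``in fact equal to it'' cannot be proved this way, and the claim ``any nontrivial extension of $K_N(\ba)$ inside $M_j$ must ramify'' is false at low levels. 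The paper sidesteps this by proving the weaker containment $M_j\cap\prod_{k\ne j}M_k\subseteq K_{N+n_0}(\beta_j)$, which is all the theorem needs, and does so by an induction on $n>n_0$ showing the degree equality
\[
\left[K_{N+n}(\beta_j)\cdot\prod_{k\ne j}M_k\;:\;K_{N+n-1}(\beta_j)\cdot\prod_{k\ne j}M_k\right]=\bigl[K_{N+n}(\beta_j):K_{N+n-1}(\beta_j)\bigr],
\]
with each step supplied by Proposition~\ref{main Galois} applied to a suitable $M$ unramified over the relevant prime. Your argument is salvageable by replacing $K_N(\ba)$ with $K_{N+n_0}(\ba)$ throughout and by phrasing the ramification contrast level-by-level above $n_0$ (rather than over all of $M_j$ at once, since the primes vary with $n$); that is exactly what the paper's inductive degree computation accomplishes.
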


\begin{proof}
Applying Proposition \ref{from-BT2} to each $\beta_j$ and take the maximum,
we have some
$N\ge 0$ such that for all $\alpha \in B=\bigcup_{j=1}^t f^{-N}(\beta_j)$
and for all $n \geq 1$, the polynomial $f^n(x) - \alpha$ is
irreducible over $K(\alpha)$. Let $n_0$ be as
in Proposition \ref{final prop} for $\ba=B$.

Now fix a $j$. We claim that
\[  M_j \cap \prod_{k \ne j} M_k \subseteq K_{N+n_0}(\beta_j) \] so that
it has a finite index over $K$. It suffices to show that
\[ K_{N+n}(\beta_j) \cap  \prod_{k \ne j} M_k \subseteq K_{N+n_0}(\beta_j)\]
for all $n > n_0$, which is equivalent to
\begin{equation*}
  \left[K_{N+n}(\beta_j) \cdot \prod_{k \ne j} M_k :
  K_{N+n_0}(\beta_j) \cdot \prod_{k \ne j} M_k \right]
  = [K_{N+n} (\beta_j) : K_{N+n_0}(\beta_j)]
\end{equation*}   
for all $n > n_0$.  Using induction, it will suffice to show that
\begin{equation}\label{induct}
  \begin{split}
  & \left[K_{N+n}(\beta_j) \cdot \prod_{k \ne j}  M_k  : 
  K_{N+n-1}(\beta_j) \cdot \prod_{k \ne j} M_k \right] \\
   & = [K_{N+n} (\beta_j) : K_{N+n-1}(\beta_j)]
\end{split}
\end{equation} for all $n > n_0$.

Let $\alpha_i \in f^{-N}(\beta_j)$.
For each $n > n_0$, there is a prime $\fq_i$ in
$L=K(B)$ corresponding to $\alpha_i$, satisfying
(i), (ii), and (iii) of Proposition \ref{final prop}.
$\fq_i \cap K$ does not ramify in $\prod_{k\ne j} M_k$ due to (ii) and (iii)
of Proposition \ref{final prop} and Proposition \ref{necessary}.
Also, $\fq_i \cap K$ does not ramify in $K_{N+n_0}(\beta_j)$ due to (i)
and (iii) of Proposition \ref{final prop} and Proposition \ref{necessary}.
Therefore, by Proposition \ref{main Galois}, we have that
\ref{induct} holds and our proof is complete.

\end{proof}

\section{The multitree}\label{multitree}

In this section we introduce a generalization of trees, which we call 
multitrees, in order to give a pleasant interpretation of Theorem~\ref{disjoint-theorem} in 
terms of a finite index statement. For our purposes, we can simplify the presentation of multitrees 
in~\cite[Section 11]{BT2} by avoiding the use of stunted trees.

Let $f\in K(x)$ with $\deg f\geq 2$ and set $\ba = \{\alpha_1,\dots,\alpha_s\}\subseteq K$. 
%As in Theorem~\ref{disjoint-theorem}, let $M_i=\bigcup_{\ell=1}^\infty K(f^{-\ell}(\alpha_i))$.
%Recall the definition of $\cL_n(\beta_j)$ from \cite{BT2} as the $n$th level of a stunted tree rooted at $\beta_j$. 
Define
\[
\cM_n(\ba)= \bigcup_{i=0}^n \bigcup_{j=1}^s f^{-i}(\alpha_j)
\]
and
\[
G_n(\ba) = \Gal(K(\cM_n(\ba ))/K(\ba )).
\]
We refer to $\cM_n(\ba)$ as a \emph{multitree}. It can be pictured as the union of $s$ distinct trees of level $n$, rooted at the $\alpha_i$.

As $n\to\infty$, define the direct limit \[\cM_\infty(\ba) = \lim_{\longrightarrow} \cM_n(\ba)\]
 and the inverse limit \[G_\infty(\ba) = \lim_{\longleftarrow} G_n(\ba)\] 
 just as in the single tree case.
For each $n$, $G_n(\ba) $ acts faithfully on 
$\cM_n(\ba)$ in the usual way. So there are injections 
$G_n(\ba) \hookrightarrow\Aut(\cM_n(\ba))$, and thus an injection 
$G_\infty(\ba) \hookrightarrow\Aut(\cM_\infty(\ba))$, where an automorphism of the 
multitree must fix each root $\alpha_i$.

Suppose that the individual trees rooted at $\alpha_i$ are disjoint, and that each $\alpha_i$ is neither periodic 
nor postcritical for $f$. Then the automorphism group of the infinite multitree has the simple description
\[\Aut(\cM_\infty(\ba))\cong \Aut(T_\infty^d)^s,\]
that is, the direct product of $s$ copies of $\Aut(T_\infty^d)$. 
This group has a subgroup $\left([C_q]^\infty\right)^s$, which is the direct product of $s$ copies of the 
permutation group given 
by the infinite iterated wreath product action of $C_q$ on $T_\infty^d$. If there are 
$s$ different polynomial maps $f(x)=x^d+c_i$ that satisfy the hypotheses of Theorem~\ref{p-theorem}, 
then it is easy to see that 
$G_\infty(\ba)$ embeds into $\left([C_q]^\infty\right)^s$. Thus we may rephrase Theorem~\ref{disjoint-theorem} 
as a finite index statement.  It is most easily stated when $K$
contains a $q$-th root of unity.  

\begin{thm}\label{multitree theorem}
  Let $K$ be a number field that contains a $q$-th root of unity and
  suppose that $\alpha_i\in K$ are strictly preperiodic for $f$.
  Suppose that there are no $i \ne j$ with the property that
  $f^{\ell}(\alpha_i) = \alpha_j$ for some $\ell \geq 0$. Then
  \[[\left([C_q]^\infty\right)^s:G_\infty(\ba)]<\infty.\]
\end{thm}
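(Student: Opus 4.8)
The plan is to deduce Theorem~\ref{multitree theorem} directly from Theorem~\ref{d2} (equivalently Theorem~\ref{disjoint-theorem}) together with the structural description of $\Aut(\cM_\infty(\ba))$ given just above the statement. Since $K$ contains a primitive $q$-th root of unity, each $\mathbf{G}_\infty$ attached to $f(x)=x^q+c$ is exactly $[C_q]^\infty$, and the full ambient group $\left([C_q]^\infty\right)^s$ is precisely the product over the $s$ trees. So the content of the theorem is that $G_\infty(\ba)$, viewed inside this product, has finite index.

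First I would set $M_j = K_\infty(\alpha_j)$ for each $j$, so that $K(\cM_\infty(\ba)) = M_1 \cdots M_s$ is the compositum, and $G_\infty(\ba) = \Gal(M_1\cdots M_s/K)$. The projection of $G_\infty(\ba)$ onto the $j$-th factor is $G_\infty(\alpha_j) = \Gal(M_j/K)$, which by Theorem~\ref{p2} (applicable since the hypotheses of Theorem~\ref{multitree theorem} include that $0$ is not preperiodic and each $\alpha_i$ is strictly preperiodic, and after our reduction $K$ contains $\xi_q$) has finite index in $[C_q]^\infty$. Thus each coordinate projection is ``almost surjective''. The remaining point is to control how far $G_\infty(\ba)$ is from being the full fiber product of these factors, i.e.\ to bound the failure of independence among the $M_j$.

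The key step is a Goursat-type argument. A subgroup $H \le \prod_{j=1}^s A_j$ (here $A_j = \Gal(M_j/K)$, each of finite index in $[C_q]^\infty$) whose projection onto each factor is all of $A_j$ need not be the whole product, but the ``defect'' is measured by the subgroups $A_j \cap \prod_{k\ne j} A_k$ inside the relevant compositum — concretely by the degrees $\bigl[M_j \cap \prod_{k\ne j} M_k : K\bigr]$. Theorem~\ref{d2} tells us exactly that each of these degrees is finite. I would run the following induction on $s$: writing $N = M_1\cdots M_{s-1}$ and $M = M_s$, we have $[M_1\cdots M_s : N] = [M : M\cap N]$, and $M\cap N \subseteq M_s \cap \prod_{k\ne s} M_k$, which has finite index over $K$ by Theorem~\ref{d2}; hence $[M:M\cap N] \ge [M:K]/\bigl[M_s\cap\prod_{k\ne s}M_k:K\bigr]$ differs from $[M:K]$ by a bounded factor. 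Combining this across all $s$ steps with the finiteness of $[[C_q]^\infty : A_j]$ from Theorem~\ref{p2} yields
\[
\bigl[\left([C_q]^\infty\right)^s : G_\infty(\ba)\bigr]
\le \prod_{j=1}^s \bigl[[C_q]^\infty : \Gal(M_j/K)\bigr] \cdot \prod_{j=1}^s \Bigl[M_j \cap \textstyle\prod_{k\ne j} M_k : K\Bigr] < \infty,
\]
which is the desired bound. (One should phrase this slightly carefully at the level of finite layers $\cM_n(\ba)$ and pass to the limit, exactly as in the proof of Proposition~\ref{indexprop}, to avoid any subtlety with infinite profinite groups; this is routine.)

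The main obstacle — though it is more bookkeeping than genuine difficulty — is packaging the multi-field compositum estimate cleanly: one must be sure that ``finite index of each projection'' plus ``finite degree of each pairwise-type intersection $M_j\cap\prod_{k\ne j}M_k$'' really does force finite index of the fiber product, and that the inductive step does not secretly require the stronger statement that $M_j \cap \prod_{k\ne j} M_k = K$. It does not: a Goursat/counting argument gives a finite index bound from finite (not trivial) intersections, and that is precisely what Theorem~\ref{d2} supplies. Once that lemma is isolated, the proof is a two-line application.
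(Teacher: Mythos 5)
Your proposal is correct and takes essentially the same route as the paper: the paper's proof likewise passes from Theorem~\ref{disjoint-theorem} (your Theorem~\ref{d2}) to finite index of $G_\infty(\ba)$ inside $\prod_j G_\infty(\alpha_j)$, and then applies Theorem~\ref{p-theorem} coordinatewise, compressing the compositum/Goursat bookkeeping you carried out explicitly into the phrase ``basic Galois theory.'' Your version is a fuller writeup of the same two-step argument, and the explicit index bound you extract is a welcome bonus.
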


\begin{proof}
The group $G_\infty(\ba)$ equals $\Gal(\prod_{i=1}^s K_\infty(f,\alpha_i)/K)$, which has 
finite index in the direct product $G_\infty(f,\alpha_1)\times\dots\times G_\infty(f,\alpha_s)$ by 
Theorem~\ref{disjoint-theorem} and basic Galois theory. This group in turn has finite index 
in $\left([C_q]^\infty\right)^s$ by applying Theorem~\ref{p-theorem} to each $G_\infty$ separately. 
\end{proof}

\begin{rem}\label{per}
  For periodic $\alpha$, the point $\alpha$ appears repeatedly as its
  own inverse image, so the natural tree for $G_\infty(\alpha)$ act on
  is the product of the rooted binary trees corresponding to the
  strictly preperiodic elements of $f^{-1}(\alpha)$, so Theorem
  \ref{multitree theorem} provides a finite index theorem for periodic
  points of $x^q + c$ as well.   
\end{rem}

%\bibliographystyle{amsalpha}
%\bibliography{QuadBib}

\newcommand{\etalchar}[1]{$^{#1}$}
\providecommand{\bysame}{\leavevmode\hbox to3em{\hrulefill}\thinspace}
\providecommand{\MR}{\relax\ifhmode\unskip\space\fi MR }
% \MRhref is called by the amsart/book/proc definition of \MR.
\providecommand{\MRhref}[2]{%
  \href{http://www.ams.org/mathscinet-getitem?mr=#1}{#2}
}
\providecommand{\href}[2]{#2}

\end{document}